\newcommand*{\mailto}[1]{\href{mailto:#1}{\nolinkurl{#1}}}
\newcommand{\arxiv}[1]{\href{http://arxiv.org/abs/#1}{arXiv:#1}}
\newcommand{\msc}[1]{\href{http://www.ams.org/msc/msc2010.html?t=&s=#1}{#1}}
\newtheorem{theorem}{Theorem}[section]
\newtheorem{lemma}[theorem]{Lemma}
\newtheorem{corollary}[theorem]{Corollary}
\newtheorem{remark}[theorem]{Remark}
\newcommand{\R}{{\mathbb R}}
\newcommand{\C}{{\mathbb C}}
\newcommand{\id}{{\mathbbm{1}}}
\newcommand{\OO}{{\mathcal O}}
\newcommand{\cW}{{\mathcal W}}
\newcommand{\loc}{\mathrm{loc}}
\newcommand{\dom}{\mathrm{dom}}
\newcommand{\I}{\mathrm{i}}
\newcommand{\E}{\mathrm{e}}
\DeclareMathOperator{\im}{Im}
\newcommand{\nn}{\nonumber}
\newcommand{\be}{\begin{equation}}
\newcommand{\ee}{\end{equation}}
\newcommand{\ti}{\tilde}
\newcommand{\abs}[1]{\left\lvert #1 \right\rvert}
\newcommand{\norm}[1]{\left\lVert #1 \right\rVert}
\newcommand{\eps}{\varepsilon}
\newcommand{\lam}{\lambda}
\numberwithin{equation}{section}
\begin{document}

\title[Dispersion Estimates: The Effect of Boundary Conditions]{Dispersion Estimates for Spherical Schr\"odinger Equations: The Effect of Boundary Conditions}

\dedicatory{Dedicated with great pleasure to Petru A. Cojuhari on the occasion of his 65th birthday}

\author[M. Holzleitner]{Markus Holzleitner}
\address{Faculty of Mathematics\\ University of Vienna\\
Oskar-Morgenstern-Platz 1\\ 1090 Wien\\ Austria}
\email{\mailto{amhang1@gmx.at}}

\author[A. Kostenko]{Aleksey Kostenko}
\address{Faculty of Mathematics\\ University of Vienna\\
Oskar-Morgenstern-Platz 1\\ 1090 Wien\\ Austria}
\email{\mailto{duzer80@gmail.com};\mailto{Oleksiy.Kostenko@univie.ac.at}}
\urladdr{\url{http://www.mat.univie.ac.at/~kostenko/}}

\author[G. Teschl]{Gerald Teschl}
\address{Faculty of Mathematics\\ University of Vienna\\
Oskar-Morgenstern-Platz 1\\ 1090 Wien\\ Austria\\ and International 
Erwin Schr\"odinger Institute for Mathematical Physics\\ 
Boltzmanngasse 9\\ 1090 Wien\\ Austria}
\email{\mailto{Gerald.Teschl@univie.ac.at}}
\urladdr{\url{http://www.mat.univie.ac.at/~gerald/}}

\thanks{{\it Research supported by the Austrian Science Fund (FWF) 
under Grants No.\ P26060 and W1245}}

\thanks{Opuscula Math. {\bf 36}, no.~6, 769--786 (2016)}
\keywords{Schr\"odinger equation, dispersive estimates, scattering}
\subjclass[2010]{Primary \msc{35Q41}, \msc{34L25}; Secondary \msc{81U30}, \msc{81Q15}}

\begin{abstract}
We investigate the dependence of the $L^1\to L^\infty$ dispersive estimates for one-dimensional radial Schr\"o\-din\-ger operators on boundary conditions at $0$. In contrast to the case of additive perturbations, we show that the change of a boundary condition at zero results in the change of the dispersive decay estimates if the angular momentum is positive, $l\in (0,1/2)$. However, for nonpositive angular momenta, $l\in (-1/2,0]$,  the standard $O(|t|^{-1/2})$ decay remains true for all self-adjoint realizations.
\end{abstract}

\maketitle

\section{Introduction}

We are concerned with the one-dimensional Schr\"odinger equation
\begin{equation} \label{Schr}
  \I \dot \psi(t,x) = H_\alpha \psi(t,x), \quad 
  H_\alpha:= - \frac{d^2}{dx^2} + \frac{l(l+1)}{x^2},\quad 
  (t,x)\in\R\times \R_+,
\end{equation}
with the angular momentum 
$|l| < \frac{1}{2}$ and self-adjoint boundary conditions at $x=0$ parameterized by a parameter $\alpha \in [0,\pi)$ (the definition is given in Section \ref{sec:II},
see \eqref{eq:Gamma01}--\eqref{eq:bc_alpha} --- for recent discussion of this family of operators see \cite{ab,dr}). 
More precisely, we are interested in the dependence of the $L^1\to L^\infty$ dispersive estimates associated to the evolution group $\E^{-\I tH_\alpha}$ on the parameters $\alpha\in [0,\pi)$ and $l\in (-1/2,1/2)$.

On the whole line such results have a long tradition and we refer to Weder \cite{wed}, Goldberg and Schlag \cite{GS}, Egorova, Kopylova, Marchenko and Teschl \cite{EKMT}, as well as the reviews \cite{K10,schlag}. On the half line, the case $l=0$ with a Dirichlet boundary condition was treated by Weder \cite{wed2}. The case of general $l$ and the Friedrichs boundary condition at $0$ ($\alpha=0$ in our notation)
\be\label{eq:bc}
\lim_{x\to0} x^l ( (l+1)f(x) - x f'(x))=0, \qquad l\in\Big(-\frac{1}{2},\frac{1}{2}\Big),
\ee
was recently considered in Kova\v{r}\'{i}k and Truc \cite{kotr} and they proved (see Theorem 2.4 in \cite{kotr}) that 
\be\label{eq:01}
\|\E^{-\I tH_0}\|_{L^1(\R_+)\to L^\infty(\R_+)} = \OO(|t|^{-1/2}),\quad t\to\infty.
\ee
It was proved in \cite{ktt} that this estimate remains true under additive perturbations. More precisely (see \cite[Theorem 1.1]{ktt}), let $H=H_0+q$, where the potential $q$ is a real integrable on $\R_+$ function. If in addition
\be\label{eq:q_hyp}
\int_0^1 |q(x)| dx <\infty \quad \text{and} \quad  
\int_1^\infty x^{\max(2,l+1)} |q(x)| dx <\infty,
\ee
and there is neither a resonance nor an eigenvalue at $0$, then 
\begin{equation}\label{full}
\norm{\E^{-\I tH}P_c(H)}_{L^1(\R_+)\to L^\infty(\R_+)}
	= \mathcal{O}(|t|^{-1/2}),
\quad t\to\infty.
\end{equation}
Here $P_c(H)$ is the orthogonal projection in $L^2(\R_+)$ onto 
the continuous spectrum of $H$.

The main result of the present paper shows that the decay estimates \eqref{eq:01} and \eqref{full} are no longer true for $\alpha\in (0,\pi)$ if $l\in(0,1/2)$. In other words, this means that singular rank one perturbations destroy these decay estimates if $l\in (0,1/2)$ (since the change of a boundary condition can be considered as a rank one perturbation in the resolvent sense). Namely, consider first the operator $H_{\pi/2}$, which is associated with the following boundary condition at $x=0$:
\be\label{eq:02}
\lim_{x\to0} x^{-l-1} ( lf(x) + x f'(x))=0, \qquad l\in\Big(-\frac{1}{2},\frac{1}{2}\Big).
\ee
 
\begin{theorem}\label{th:pi/2}
Let $|l|<1/2$. Then 
\be\label{eq:03a}
\|\E^{-\I t H_{\pi/2}}\|_{L^1(\R_+) \to L^\infty(\R_+)} = \OO(|t|^{-1/2}), \quad t\to \infty,
\ee
for all $l\in (-1/2, 0]$, and 
\be\label{eq:03b}
\|\E^{-\I t H_{\pi/2}}\|_{L^1(\R_+,  \max(x^{-l},1)) \to L^\infty(\R_+, \min(x^l,1))} = \OO(|t|^{-1/2+l}), \quad t\to \infty,
\ee
whenever $l\in (0,1/2)$. The last estimate is sharp.
\end{theorem}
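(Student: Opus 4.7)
My plan is to compute the integral kernel of $\E^{-\I t H_{\pi/2}}$ explicitly and then read off the claimed estimates from standard Bessel asymptotics. The regular solution of $-u''+l(l+1)u/x^2 = k^2 u$ at $x=0$ satisfying the boundary condition \eqref{eq:02} is a multiple of $\sqrt{x}\,J_{-l-1/2}(kx)$, so (in contrast with the Friedrichs case, where the index is $l+\tfrac{1}{2}$) the spectral transform for $H_{\pi/2}$ is the Hankel transform of order $\nu = -l-\tfrac{1}{2}\in(-1,0)$. Consequently
\begin{equation*}
K_{\pi/2}(t;x,y) = \sqrt{xy}\int_0^\infty \E^{-\I t k^2} J_{-l-1/2}(kx) J_{-l-1/2}(ky)\, k\,dk,
\end{equation*}
and the Weber--Hille--Hardy formula (extended from $\re p>0$ to $p=\I t$ via the standard $p=\I t+\eps,\ \eps\downarrow 0$ regularization) yields
\begin{equation*}
K_{\pi/2}(t;x,y) = \frac{c_l\sqrt{xy}}{t}\, \E^{\I(x^2+y^2)/(4t)}\, J_{-l-1/2}\!\left(\frac{xy}{2t}\right),
\end{equation*}
for a nonzero constant $c_l$.

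Next I would extract pointwise bounds from $J_\nu(z)=\OO(z^\nu)$ as $z\to 0^+$ and $J_\nu(z)=\OO(z^{-1/2})$ as $z\to\infty$ (both valid in our range $\nu\in(-1,0)$). Splitting the plane at $xy=t$ produces
\begin{equation*}
|K_{\pi/2}(t;x,y)|\lesssim \min\!\Big(\tfrac{(xy)^{-l}}{t^{1/2-l}},\, t^{-1/2}\Big).
\end{equation*}
For $l\in(-1/2,0]$ one has $(xy)^{-l}=(xy)^{|l|}\le t^{|l|}$ on $\{xy\le t\}$, and since $|l|+l-\tfrac{1}{2}=-\tfrac{1}{2}$ this gives $|K_{\pi/2}(t;x,y)|\lesssim t^{-1/2}$ uniformly in $x,y$, proving \eqref{eq:03a}. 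For $l\in(0,1/2)$ the kernel genuinely blows up as $xy\to 0$, but the elementary identity $\min(x^l,1)\,x^{-l}=\min(1,x^{-l})\le 1$ (together with its analogue in $y$) exactly cancels this singularity on $\{xy\le t\}$, while on $\{xy>t\}$ one has $t^{-1/2}\le t^{-1/2+l}$ for $t\ge 1$. Since the weighted operator norm in \eqref{eq:03b} is
\begin{equation*}
\sup_{x,y>0}\,\min(x^l,1)\min(y^l,1)\,|K_{\pi/2}(t;x,y)|,
\end{equation*}
combining the two regions yields the claimed $\OO(|t|^{-1/2+l})$ decay.

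Sharpness I would establish by testing against $f=\chi_{[0,1]}$, which lies in $L^1(\R_+,\max(y^{-l},1))$ because $l<1$. For $x\in(0,1]$ and $y\in[0,1]$ the argument $xy/(2t)$ is uniformly small for $t\ge 1$, the oscillatory factor $\E^{\I(x^2+y^2)/(4t)}$ equals $1+\OO(1/t)$, and substituting the leading order $J_{-l-1/2}(u)\sim u^{-l-1/2}/(2^{-l-1/2}\Gamma(\tfrac{1}{2}-l))$ into the kernel and integrating in $y$ gives
\begin{equation*}
(\E^{-\I tH_{\pi/2}}f)(x)\sim \frac{c_l'\,x^{-l}}{t^{1/2-l}},\qquad c_l'\ne 0.
\end{equation*}
Evaluating at $x=1$, where $w_2(x)=1$, produces the matching lower bound $\|\E^{-\I tH_{\pi/2}}f\|_{L^\infty(\R_+,\min(x^l,1))}\gtrsim t^{-1/2+l}$. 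The most delicate point I expect to encounter is the rigorous justification of the Weber--Hille--Hardy identity for the oscillatory parameter $p=\I t$; after that the argument reduces to tracking the two Bessel regimes and to elementary bookkeeping with the weights.
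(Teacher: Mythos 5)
Your proposal is correct and follows essentially the same route as the paper: the kernel is computed via Weber's second exponential integral for the order $-l-\tfrac12$ Hankel transform with the $\eps$-regularization, the decay follows from the two Bessel regimes (your $\min$-bound is equivalent to the paper's single estimate $|K|\lesssim t^{-1/2}\bigl(\tfrac{2t+xy}{xy}\bigr)^{l}$), and sharpness comes from the small-argument asymptotics of $J_{-l-1/2}$. Your sharpness argument via testing against $\chi_{[0,1]}$ is slightly more explicit than the paper's (which just exhibits a pointwise lower bound on the kernel for $xy<t$), but it is the same idea.
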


In the remaining case $ \alpha\in (0,\pi/2)\cup (\pi/2,\pi)$, the decay estimate is given by the the next theorem.

\begin{theorem}\label{th:no3}
Let $|l|<1/2$ and $ \alpha\in (0,\pi/2)\cup (\pi/2,\pi)$. Then 
\be\label{eq:04a}
\|\E^{-\I t H_{\alpha}}P_c(H_\alpha)\|_{L^1(\R_+) \to L^\infty(\R_+)} = \OO(|t|^{-1/2}), \quad t\to \infty,
\ee
for all $l\in (-1/2, 0]$, and 
\be\label{eq:04b}
\|\E^{-\I t H_{\alpha}}P_c(H_\alpha)\|_{L^1(\R_+,   \max(x^{-l},1)) \to L^\infty(\R_+,\min(x^l,1))} = \OO(|t|^{-1/2}), \quad t\to \infty,
\ee
whenever $l\in (0,1/2)$.
\end{theorem}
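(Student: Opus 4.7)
The plan is to parallel the proof of Theorem \ref{th:pi/2}, replacing the regular solution of $H_{\pi/2}$ by the regular solution $\phi_\alpha(\cdot,\lambda)$ of $H_\alpha$ satisfying the boundary condition at $0$ corresponding to the parameter $\alpha$. By Stone's formula, writing $k=\sqrt\lambda$ and denoting by $f_+(\cdot,k)$ the Jost solution normalized by $f_+(x,k)\sim \E^{\I kx}$ as $x\to\infty$, the integral kernel of $\E^{-\I tH_\alpha}P_c(H_\alpha)$ takes the form
\begin{equation*}
K_t^\alpha(x,y)=\frac{1}{\pi}\int_0^\infty \E^{-\I tk^2}\,\phi_\alpha(x,k)\,\phi_\alpha(y,k)\,\frac{k\,dk}{|W_\alpha(k)|^2},
\end{equation*}
where $W_\alpha(k)=W(\phi_\alpha(\cdot,k),f_+(\cdot,k))$ is the modified Jost function. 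I split the $k$-integral with a smooth cutoff into a high-frequency piece $k\ge 1$ and a low-frequency piece $k\le 1$, and treat them separately.

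On the high-frequency part, $f_+$ admits the uniform asymptotics $f_+(x,k)=\E^{\I kx}(1+o(1))$, the regular solution can be expressed as a combination of $\E^{\pm\I kx}$ modulo lower-order corrections, and $|W_\alpha(k)|$ is bounded above and away from zero. A standard van der Corput / stationary-phase argument, as already used in \cite{kotr,ktt}, then yields the $\OO(|t|^{-1/2})$ bound uniformly in $\alpha\in[0,\pi)$ and in $l\in(-1/2,1/2)$, without requiring any weight.

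The low-frequency part is where the $\alpha$-dependence enters. From the Hankel-function representation of $f_+$ and the explicit boundary condition at $0$, for $\alpha\in(0,\pi/2)\cup(\pi/2,\pi)$ the regular solution has an expansion
\begin{equation*}
\phi_\alpha(x,k)=\sin\alpha\cdot x^{-l}+\cos\alpha\cdot x^{l+1}+\OO(x^{2-|l|}),\quad x\to 0,
\end{equation*}
uniformly for $k\in[0,1]$. Under the assumption that $0$ is neither an eigenvalue nor a resonance of $H_\alpha$ (eigenvalues being removed via $P_c(H_\alpha)$), the modified Jost function satisfies $W_\alpha(0)\ne 0$, so the spectral density $k/|W_\alpha(k)|^2$ behaves like $c_\alpha k$ near $k=0$ --- in sharp contrast with the $\alpha=\pi/2$ case, where an extra $k^{-2l}$ factor produces the weaker bound \eqref{eq:03b}. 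For $l\in(0,1/2)$ the $x^{-l}$ component of $\phi_\alpha$ is exactly cancelled by multiplying the kernel by $x^l y^l$, which is precisely the effect of the weights $\max(x^{-l},1)$ in the source space and $\min(x^l,1)$ in the target space; the remaining oscillatory integral in $k$ is then controlled by stationary phase, yielding \eqref{eq:04b}. For $l\in(-1/2,0]$ the factor $x^{-l}$ is bounded near $0$ and no weight is required, which gives \eqref{eq:04a}.

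The main obstacle is obtaining uniform-in-$(x,k)$ control of $\phi_\alpha(x,k)$ and $W_\alpha(k)$ in the joint limit $x,k\to 0$, sharp enough to isolate exactly the $x^{-l}$ singularity that the weight $\min(x^l,1)$ is designed to absorb, while preserving an $\alpha$-uniform estimate on the remainder. A secondary technical point is to rule out a zero-energy resonance for $H_\alpha$ with $\alpha\in(0,\pi/2)\cup(\pi/2,\pi)$ and $|l|<1/2$, which should be non-generic but must nonetheless be verified via a Weyl--Titchmarsh analysis of the singular endpoint $0$ before the final stationary-phase step can be applied.
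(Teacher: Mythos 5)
Your overall strategy---spectral representation of the kernel as an oscillatory integral in $k$, van der Corput, and weights designed to absorb the $x^{-l}$ singularity of the regular solution---is the same as the paper's, but two of your structural claims are wrong and the part you defer is exactly the substance of the proof. First, the assertion that the high-frequency piece $k\ge 1$ yields an \emph{unweighted} $\OO(|t|^{-1/2})$ bound for all $l\in(-1/2,1/2)$ cannot be correct when $l\in(0,1/2)$ and $\sin\alpha\neq 0$: for every fixed $k$ one has $\phi_\alpha(k^2,x)=\sin(\alpha)\,\theta(k^2,x)(1+o(1))\sim \sin(\alpha)\frac{x^{-l}}{2l+1}$ as $x\to 0$, so the $x^{-l}$ blow-up is present at \emph{all} energies (it comes from $\sqrt{kx}\,J_{-l-1/2}(kx)\sim (kx)^{-l}$ in the regime $kx\to 0$, which occurs for arbitrarily large $k$ once $x$ is small). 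The weight is therefore needed in both frequency regimes, not only at low energy. Second, your low-energy discussion misidentifies the spectral density: here everything is explicit, $\im m_\alpha(\lambda+\I 0)$ is given by \eqref{eq:rho_a}, and for $\alpha\neq\pi/2$ one has $\im m_\alpha(k^2)\sim c_\alpha|k|^{2l+1}$ as $k\to 0$ (see \eqref{eq:m_at0}), so the density $\im m_\alpha(k^2)\,k$ behaves like $k^{2l+2}$, not like $c_\alpha k$; correspondingly the Wronskian with the Jost solution behaves like $k^{-l}$ rather than tending to a nonzero constant. The entire ``rule out a zero-energy resonance'' step is a red herring imported from the regular ($l=0$) theory: Theorem \ref{th:no3} carries no such hypothesis, and none is needed because \eqref{eq:m_inf}--\eqref{eq:m_at0} are read off directly from the closed-form Weyl function \eqref{eq:m}.

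The point you label ``the main obstacle''---uniform-in-$(x,y)$ control of the amplitude sharp enough to isolate the $x^{-l}y^{-l}$ singularity---is precisely what the paper's proof consists of, and your proposal does not supply it. The paper expands $\phi_\alpha(k^2,x)\phi_\alpha(k^2,y)$ via \eqref{eq:phi_a} into products of $\sqrt{kx}\,J_{\pm(l+1/2)}(kx)$ and $\sqrt{ky}\,J_{\pm(l+1/2)}(ky)$, reducing \eqref{rep} to the three integrals $I_1,I_2,I_3$ of \eqref{eq:I1}--\eqref{eq:I3}; for $l\in(0,1/2)$ it factors each $J_{-l-1/2}$ term as $\chi_l(ky)^{-1}\cdot\big(\chi_l\cdot\sqrt{\cdot}\,J_{-l-1/2}\big)(ky)$ with $\chi_l(r)=|r|^l/(1+|r|^l)$, which both produces the weight $\max(1,y^{-l})$ (via $1-\chi_l(k)/\chi_l(ky)=(1-y^{-l})(1-\chi_l(k))$) and leaves bounded factors; and it verifies that every remaining factor lies in the Wiener algebra $\cW(\R)$ with $x,y$-independent norm (Lemma \ref{lem:a.1}, Beurling's theorem, Corollary \ref{lem:f_nu}), so that Lemma \ref{lem:vC2} gives $|t|^{-1/2}$ with uniform constants. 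There is also a nontrivial preliminary step you pass over, namely that $\E^{-\I t H_\alpha}P_c(H_\alpha)$ is in fact an integral operator whose kernel is the improper integral \eqref{rep} (Lemma \ref{lem:4.1}, using Appendix \ref{sec:kernel}). Without the Wiener-algebra estimates and this kernel identification, the proposal is a plan rather than a proof.
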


Notice that in the case $l\in(0,1/2)$ we need to consider weighted $L^1$ and $L^\infty$ spaces since functions contained in the domain of $H_\alpha$ might be unbounded near $0$.

Finally, let us briefly outline the content of the paper. In the next section we define the operator $H_\alpha$ and collect its basic spectral properties.
 Section \ref{sec:III} contains the proof of Theorem \ref{th:pi/2}. In particular, we compute explicitly the kernel of the evolution group $\E^{-\I tH_{\pi/2}}$ and this enables us to prove  \eqref{eq:03a} and \eqref{eq:03b} by using the estimates for Bessel functions $J_\nu$ (all necessary facts on Bessel functions are contained in Appendix \ref{sec:Bessel}). Theorem \ref{th:no3} is proved in Section \ref{sec:IV}. Its proof is based on the use of a version of the van der Corput lemma, which is given in Appendix \ref{sec:vdCorput}. Also Appendix \ref{sec:vdCorput} contains necessary facts about the Wiener algebras $\cW_0(\R)$ and $\cW(\R)$. 
In the final section we formulate some sufficient conditions for a function $f(H)$ of a 1-D Schr\"odinger operator $H$ to be an integral operator.

\section{Self-adjoint realizations and their spectral properties}\label{sec:II}

Let $l\in (-1/2,1/2)$ and denote by $H_{\max}$ the maximal operator associated with 
\[
\tau=- \frac{d^2}{dx^2} + \frac{l(l+1)}{x^2}
\]
 in $L^2(\R_+)$. Note that $\tau$ is limit point at infinity and limit circle at $x=0$ since $|l|<1/2$. Therefore, self-adjoint restrictions of $H_{\max}$ (or in other words, self-adjoint realizations of $\tau$ in $L^2(\R_+)$) form a 1-parameter family. More precisely (see, e.g., \cite{ek} and also \cite{ab}), the following limits
\be\label{eq:Gamma01}
\Gamma_0 f:=  \lim_{x\to 0} W_x(f, x^{l+1}),\quad \Gamma_1 f:=  \frac{-1}{2l+1}\lim_{x\to 0} W_x(f, x^{-l})
\ee
exist and are finite for all $f\in \dom(H_{\max})$. 
Self-adjoint restrictions $H_\alpha$ of $H_{\max}$ are parameterized by the following boundary conditions at $x=0$:
\be\label{eq:bc_alpha}
\dom(H_\alpha) = \{f\in \dom(H_{\max})\colon \sin(\alpha)\, \Gamma_1 f = \cos(\alpha)\, \Gamma_0 f\},\quad \alpha\in [0,\pi).
\ee
Note that the case $\alpha=0$ corresponds to the Friedrichs extension of $H_{\min} = H_{\max}^\ast$. 

Let $\phi(z,x)$ and $\theta(z,x)$ be the fundamental system of solutions of $\tau u = z u$ given by
\begin{align}\label{eq:phi}
\begin{split}
\phi(z,x) &= C_l^{-1} \sqrt{\frac{\pi x}{2}}{z^{-\frac{2l+1}{4}}} 
		J_{l+\frac{1}{2}}(\sqrt{z} x),\\
\theta(z,x) &= C_l \sqrt{\frac{\pi x}{2}}\frac{z^{\frac{2l+1}{4}}}{\sin((l\!+\!\frac{1}{2})\pi)}
		J_{-l-\frac{1}{2}}(\sqrt{z} x),
\end{split}
\end{align}
where $J_\nu$ is the Bessel function of order $\nu$ (see Appendix \ref{sec:Bessel}) and
\be
C_l = \frac{\sqrt{\pi}}{\Gamma(l+\frac{3}{2}) 2^{l+1}}.
\ee
The Weyl solution normalized by $\Gamma_0\psi = 1$ is given by
\be
\psi(z,x) = \theta(z,x) + m(z)\phi(z,x) = C_l\I z^{\frac{2l+1}{4}} \sqrt{\frac{\pi x}{2}} H^{(1)}_{l+1/2} (\sqrt{z}x) \in L^2(0,\infty),
\ee
where $H_\nu^{(1)}$ is the Hankel function of the first kind \cite[Chapter X.2]{dlmf}, and 
\be\label{eq:m}
m(z) = -C_l^{2}\frac{(-z)^{l+1/2}}{\sin((l+\frac{1}{2})\pi)}  , \quad z\in\C\setminus\R_+,
\ee
is the Weyl function associated with $H_0$. Here the branch cut of the root is taken along the negative real axis. Notice that
\be\label{eq:rho}
d\rho(\lam) = \frac{C_l^2}{\pi} \id_{[0,\infty)}(\lam)\lam^{l+\frac{1}{2}}d\lam
\ee
is the corresponding spectral measure. 
It follows from \eqref{eq:Jnu01} that
\[
\phi(z,x) = x^{l+1}(1+o(1)), \quad \theta(z,x) = \frac{x^{-l}}{2l+1}(1+o(1)),
\] 
as $x\to 0$ and, moreover, 
\[
\Gamma_0 \theta = \Gamma_1 \phi= 1,\quad  \Gamma_1 \theta = \Gamma_0 \phi= 0.
\]
Set
\be\label{eq:phi_a}
\begin{split}
\phi_\alpha(z,x) &:= \cos(\alpha) \phi(z,x) + \sin(\alpha) \theta(z,x),\\
 \theta_\alpha(z,x) &:= \cos(\alpha) \theta(z,x) - \sin(\alpha) \phi(z,x),
 \end{split}
\ee
for all $z\in\C$. Therefore, $W(\theta_\alpha,\phi_\alpha) = 1$ and 
\be\label{eq:psi_a}
\psi_\alpha(z,x) := \theta_\alpha(z,x) + m_\alpha(z)\phi_\alpha(z,x), \quad m_\alpha(z)=\frac{m(z)\cos(\alpha) + \sin(\alpha)}{\cos(\alpha) - m(z) \sin(\alpha)},
\ee
is a Weyl solution normalized by $W(\psi_\alpha,\phi_\alpha) = 1$. Hence
\be
G_\alpha(z;x,y) = \begin{cases} \phi_\alpha(z,x) \psi_\alpha(z,y), & x\le y, \\  \phi_\alpha(z,x) \psi_\alpha(z,y), & x\ge y, \end{cases}
\ee
is the Green's function of $H_\alpha$. The absolutely continuous spectrum remains unchanged, $\sigma_{\rm ac}(H_\alpha) = [0,\infty)$, but there is one additional eigenvalue
\be
E_\alpha = -\left(\frac{\cot(\alpha) \cos(l\pi)}{C_l^{2}}\right)^{\frac{2}{2l+1}}
\ee
if $\frac{\pi}{2}<\alpha<\pi$. Finally, since 
\be\label{eq:im_m_a}
\im m_\alpha(z) = \frac{\im m(z)}{|\cos(\alpha) - m(z) \sin(\alpha)|^2}, 
\ee
we get the absolutely continuous part of the corresponding spectral measure of the operator $H_\alpha$:
\begin{align}\label{eq:rho_a}
\begin{split}
\rho_{\alpha}'(\lambda)d\lam &= \frac{1}{\pi} \im m_\alpha(\lam+\I0)d\lam\\
&=\frac{1}{\pi}\frac{C_l^2\lambda^{l+1/2}\id_{[0,\infty)}(\lam)}{(\cos(\alpha) - C_l^2 \sin(\alpha)\tan(\pi l)\lam^{l+1/2})^2 + C_l^4\sin^2(\alpha) \lam^{2l+1}} d\lam. 
\end{split}
\end{align}

\section{Proof of Theorem \ref{th:pi/2}}\label{sec:III}

Similar to the case $\alpha=0$ (see \cite{kotr}), the kernel of the evolution group $\E^{-\I tH_{\pi/2}}$ can be computed explicitly. 

\begin{lemma}\label{lem:pi/2}
Let $|l|<1/2$. Then the evolution group $\E^{-\I tH_{\pi/2}}$ is an integral operator for all $t\neq 0$ and its kernel is given by
\be\label{eq:evkern2}
[\E^{-\I tH_{\pi/2}}](x,y)
       = \frac{\I^{l-1/2}}{2 t} \E^{\I \frac{x^2 + y^2}{4t}} \sqrt{xy}\, J_{-l-1/2}\left(\frac{xy}{2 t}\right),
\ee
for all $x$, $y>0$ and $t\neq 0$.
\end{lemma}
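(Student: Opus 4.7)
The plan is to compute the kernel of $\E^{-\I tH_{\pi/2}}$ directly from the spectral resolution of $H_{\pi/2}$ and then reduce the resulting oscillatory integral to a Weber-type evaluation. First I would specialise the spectral data at $\alpha=\pi/2$: from \eqref{eq:phi_a} one has $\phi_{\pi/2}(z,x)=\theta(z,x)$, so the spectral transform of $H_{\pi/2}$ is generated by $\theta$ in \eqref{eq:phi}. From \eqref{eq:psi_a}, $m_{\pi/2}(z)=-1/m(z)=\cos(l\pi)/(C_l^{2}(-z)^{l+1/2})$, and taking $\tfrac{1}{\pi}\im m_{\pi/2}(\lambda+\I 0)$ (equivalently, specialising \eqref{eq:rho_a}) gives
\[
d\rho_{\pi/2}(\lambda)=\frac{\cos^{2}(l\pi)}{\pi\,C_l^{2}}\,\lambda^{-l-1/2}\,\id_{[0,\infty)}(\lambda)\,d\lambda.
\]
Since $\alpha=\pi/2$ lies outside $(\pi/2,\pi)$, $H_{\pi/2}$ is purely absolutely continuous and no discrete piece needs to be added.

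Next I assemble the kernel as the spectral integral
\[
[\E^{-\I tH_{\pi/2}}](x,y)=\int_0^\infty \E^{-\I t\lambda}\,\theta(\lambda,x)\,\theta(\lambda,y)\,d\rho_{\pi/2}(\lambda).
\]
Substituting \eqref{eq:phi} and the formula for $d\rho_{\pi/2}$, the prefactors $C_l^{2}$ and $\cos^{2}(l\pi)$, as well as a power of $\lambda$, cancel; the change of variables $k=\sqrt{\lambda}$ then reduces the kernel to
\[
[\E^{-\I tH_{\pi/2}}](x,y)=\sqrt{xy}\int_0^\infty \E^{-\I tk^{2}}\,J_{-l-1/2}(kx)\,J_{-l-1/2}(ky)\,k\,dk.
\]

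Finally, I would evaluate this integral by the Weber second exponential identity
\[
\int_0^\infty \E^{-pk^{2}}J_\nu(ak)J_\nu(bk)\,k\,dk=\frac{1}{2p}\,\E^{-(a^{2}+b^{2})/(4p)}\,I_\nu\!\Bigl(\frac{ab}{2p}\Bigr),\qquad \re p>0,\ \re\nu>-1,
\]
which applies since $\nu=-l-1/2>-1$. Setting $p=\eps+\I t$ with $\eps>0$ and letting $\eps\downarrow 0$, the left-hand side converges because $\E^{-(\eps+\I t)H_{\pi/2}}\to \E^{-\I tH_{\pi/2}}$ strongly on $L^2$, while the right-hand side converges pointwise to $\tfrac{-\I}{2t}\E^{\I(x^{2}+y^{2})/(4t)}I_{-l-1/2}(-\I xy/(2t))$. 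Applying $I_\nu(-\I z)=\E^{-\I\pi\nu/2}J_\nu(z)$ for $z>0$ in the principal branch produces the phase $(-\I)\,\I^{l+1/2}=\I^{l-1/2}$, which recovers \eqref{eq:evkern2}; the case $t<0$ follows by complex conjugation.

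The main obstacle is the justification of this boundary passage $\eps\downarrow 0$: by the large-argument Bessel asymptotics the integrand decays only like $k^{0}$, so Weber's integral is only conditionally convergent on $\re p=0$. One must therefore combine the strong operator convergence of $\E^{-(\eps+\I t)H_{\pi/2}}$ with pointwise convergence of the regularised kernel, tested against a dense set of functions supported away from $0$ (so that the Bessel factors contribute decay in $x,y$), while keeping careful track of the principal branch of $I_\nu$ as $p$ rotates down to the imaginary axis.
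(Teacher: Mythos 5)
Your proposal is correct and follows essentially the same route as the paper: specialise the spectral data at $\alpha=\pi/2$ so that $\phi_{\pi/2}=\theta$, write the $\eps$-regularised kernel as Weber's second exponential integral with $p=\eps+\I t$, and pass to the limit $\eps\downarrow 0$ by combining strong operator convergence with locally uniform convergence of the closed-form kernel (the paper does this via dominated convergence against compactly supported $f$). The one small inaccuracy is in your closing remark: the reason for restricting to compact subsets of $(0,\infty)$ is that the bound $|\sqrt{k}\,J_{-l-1/2}(k)|\le C\left(\frac{1+k}{k}\right)^{l}$ degenerates as the argument tends to $0$ when $l>0$, i.e.\ the issue is boundedness of the limiting kernel for small $xy$, not decay of the Bessel factors for large $x,y$.
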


\begin{proof}
First, notice that
\[
\phi_{\pi/2}(z,x) = \theta(z,x), \quad m_{\pi/2}(z) = -1/m(z),
\]
and then define the spectral transformation $U\colon L^2(\R_+)\to L^2(\R_+;\rho_{\pi/2})$ by
\[
U\colon f\mapsto \hat{f},\quad \hat{f}(\lambda):= \int_{\R_+} \theta(\lambda,x) f(x)dx,
\]
for every $f\in L^2_c(\R_+)$. Notice that $U$ extends to an isometry on $L^2(\R_+)$ and its inverse $U^{-1}\colon L^2(\R_+;\rho_{\pi/2})\to L^2(\R_+)$ is given by
\[
U^{-1}\colon g\mapsto \check{g}, \quad \check{g}(x):= \int_{\R_+} \theta(\lambda,x) g(\lambda)d\rho_{\pi/2}(\lambda),
\]
for all $g\in L^2_c(\R_+;\rho_{\pi/2})$. 
Therefore, we get by using \eqref{eq:phi} and \eqref{eq:rho_a} 
\begin{align}
(\E^{-(\I t+\varepsilon) H_{\pi/2}} f)(x) =&  (U^{-1} \E^{-(\I t+\varepsilon)\lambda} Uf)(x) = (U^{-1} \E^{-(\I t+\varepsilon)\lambda} \check{f})(x)\nn\\
&=\int_{\R_+} \theta(\lambda,x) \E^{-(\I t+\varepsilon)\lambda} \int_{\R_+} \theta(\lambda,y) f(y)\,dy\, d\rho_{\pi/2}(\lambda) \nn\\
 &=  \int_{\R_+} \int_{\R_+} \E^{-(\I t+\varepsilon) \lam} \frac{\sqrt{xy}}{2} J_{-l-\frac{1}{2}}(\sqrt{\lam} x) J_{-l-\frac{1}{2}}(\sqrt{\lam} y) f(y)\, dy\, d\lam.\nn
\end{align}
Since $|l|<1/2$, \eqref{eq:Jnu01} implies that
\be\label{eq:3.11}
|J_{-l-1/2}(k)| \le \frac{2^{l+1/2}}{\Gamma(1/2-l) k^{l+1/2}} (1 +\OO(k))
\ee
as $k\to 0$. Noting that $f\in L^2_c(\R_+)$ and using \eqref{eq:3.11}, Fubini's theorem implies 
\begin{align}
(\E^{-(\I t+\varepsilon) H_{\pi/2}} f)(x) =  \int_{\R_+} f(y) \int_{\R_+} \E^{-(\I t+\varepsilon) \lam} \frac{\sqrt{xy}}{2} J_{-l-\frac{1}{2}}(\sqrt{\lam} x) J_{-l-\frac{1}{2}}(\sqrt{\lam} y)  d\lam\, dy.
\end{align}
The integral 
\begin{align} \label{eq:evkern}
[\E^{-(\I t + \varepsilon) H_{\pi /2}}](x,y):
	= \frac{\sqrt{xy}}{2} \int_{0}^{\infty} \E^{-\I t \lam} J_{-l-\frac{1}{2}}(\sqrt{\lam} x) J_{-l-\frac{1}{2}}(\sqrt{\lam} y) d\lam
\end{align}
is known as Weber's second exponential integral \cite[\S13.31]{Wat} (cf.\ also \cite[(4.14.39)]{erd}) and hence
\[
(\E^{-(\I t+ \varepsilon) H_{\pi /2}} f)(x)=\frac{1}{\varepsilon+ \I t}\int_{0}^{\infty}  \E^{-\frac{x^2 + y^2}{4(\varepsilon+ \I t)}} \frac{\sqrt{xy}}{2} I_{-l-\frac{1}{2}}\Big(\frac{xy}{2(\varepsilon+\I t)}\Big) f(y)dy,
\]
where $I_\nu$ is the modified Bessel function (see \cite[Chapter X]{dlmf} and in particular formula (10.27.6) there)
\be \label{bessel2nd}
I_\nu(z) = \sum_{n=0}^\infty \frac{(z/2)^{\nu+2n}}{n! \Gamma(\nu +m+1)} =\E^{\mp \I\nu\pi /2} J_\nu(\pm \I z), \quad -\pi \le \arg(z) \le \pi/2.
\ee
The estimate \eqref{eq:asymp-J-infty} implies
\be
|J_{-l-1/2}(k)| \le k^{-1/2} (1+\OO(k^{-1}))
\ee
as $k\to \infty$. Therefore, there is  $C>0$ which depends only on $l$ and such that 
\be \label{besselest}
|\sqrt{k}J_{-l-1/2}(k)| \le C \left( \frac{1+k}{k}\right)^{l},\quad k>0.
\ee
By \eqref{besselest} we deduce 
\[
\frac{\sqrt{xy}}{2|\varepsilon+ \I t|} \left|\E^{-\frac{x^2 + y^2}{4(\varepsilon+ \I t)}} I_{-l-\frac{1}{2}}\Big(\frac{xy}{2(\varepsilon+\I t)}\Big)\right| \le C \sqrt{\frac{1}{|\varepsilon+\I t|}}\left| 1+\frac{2(\varepsilon+\I t)}{xy}\right|^{l},
\]
which is uniformly (wrt. $\varepsilon$) bounded on compact sets $K \subset \subset \R_+ \times \R_+$.
Thus we can apply dominated convergence and hence the claim follows.
\end{proof}

In particular, we immediately arrive at the following estimate.

\begin{corollary}\label{lem:est_pi/2}
Let $|l|<1/2$. Then there is a constant $C>0$ which depends only on $l$ and such that the inequality
\be\label{eq:3.10}
\big|[\E^{-\I tH_{\pi /2}}](x,y)\big| \le \frac{C}{\sqrt{2t}} \left( \frac{2t+xy}{xy}\right)^{l}
\ee
holds for all $x$, $y>0$ and $t> 0$.
\end{corollary}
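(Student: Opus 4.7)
The corollary is essentially an immediate repackaging of the kernel formula in Lemma \ref{lem:pi/2} together with the Bessel estimate \eqref{besselest} that was already established en route to that lemma. The plan is therefore short and mostly algebraic.

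First, I would start from the explicit kernel
\[
[\E^{-\I tH_{\pi/2}}](x,y) = \frac{\I^{l-1/2}}{2t}\, \E^{\I\frac{x^2+y^2}{4t}} \sqrt{xy}\, J_{-l-1/2}\!\left(\frac{xy}{2t}\right)
\]
given by \eqref{eq:evkern2}, and simply take absolute values. Since $|\I^{l-1/2}|=|\E^{\I(x^2+y^2)/(4t)}|=1$ for real $t>0$, this reduces the problem to bounding
\[
\big|[\E^{-\I tH_{\pi/2}}](x,y)\big| = \frac{\sqrt{xy}}{2t}\left|J_{-l-1/2}\!\left(\frac{xy}{2t}\right)\right|.
\]

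Next, I would introduce the variable $k := xy/(2t) > 0$, so that $\sqrt{xy} = \sqrt{2t}\,\sqrt{k}$, and rewrite the right-hand side as
\[
\frac{1}{\sqrt{2t}}\cdot\frac{\big|\sqrt{k}\, J_{-l-1/2}(k)\big|}{1}.
\]
At this point the uniform Bessel estimate \eqref{besselest}, namely $|\sqrt{k}\, J_{-l-1/2}(k)| \le C\,((1+k)/k)^{l}$ with a constant $C$ depending only on $l$, applies directly. Substituting $k=xy/(2t)$ converts $(1+k)/k$ into $(2t+xy)/(xy)$, which yields exactly the claimed bound
\[
\big|[\E^{-\I tH_{\pi/2}}](x,y)\big| \le \frac{C}{\sqrt{2t}}\left(\frac{2t+xy}{xy}\right)^{l}.
\]

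There is no genuine obstacle here; the only point worth double-checking is that \eqref{besselest} indeed holds uniformly for all $k>0$ (not just asymptotically), which is the content of the small- and large-argument Bessel asymptotics \eqref{eq:Jnu01} and \eqref{eq:asymp-J-infty} already used inside the proof of Lemma \ref{lem:pi/2}. The estimate \eqref{eq:3.10} is therefore just the statement one obtains by tracking the $\varepsilon\to 0$ limit of the pointwise bound already proved in that argument.
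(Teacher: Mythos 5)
Your proposal is correct and is exactly the paper's argument: the corollary is obtained by applying the uniform Bessel bound \eqref{besselest} to the explicit kernel \eqref{eq:evkern2}, with the substitution $k=xy/(2t)$ converting $((1+k)/k)^l$ into $((2t+xy)/(xy))^l$. No differences worth noting.
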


\begin{proof}
Applying \eqref{besselest} to \eqref{eq:evkern2}, we arrive at \eqref{eq:3.10}. 
\end{proof}

\begin{remark}\label{rem:01}
For any fixed $x$ and $y \in \R_+$, we get from \eqref{eq:Jnu01} 
\be
\Big|\E^{-\I t H_{\pi/2}}(x,y)\Big| \sim \frac{\sqrt{xy}}{2t}   \left(\frac{xy}{4 t}\right)^{-l-1/2} = \frac{1}{t^{1/2 - l}} \left(\frac{xy}{2}\right)^{-l}
\ee
Moreover, in view of \eqref{eq:Jnu01} one can see that
\be
\Big|\E^{-\I t H_{\pi/2}}(x,y)\Big| \ge c_l\, {t^{l - 1/2}} \left(\frac{xy}{2}\right)^{-l},
\ee
whenever $xy < t$ with some constant $c_l>0$, which depends only on $l$.
\end{remark}

Now we are ready to prove our first main result.

\begin{proof}[Proof of Theorem \ref{th:pi/2}]
If $l\in (-1/2,0]$, then 
\[
\left( \frac{2t+xy}{xy}\right)^{l} \le 1
\]
for all $x$,$y>0$ and $t\ge 0$. This immediately implies \eqref{eq:03a}. 

Assume now that $l\in (0,1/2)$. Clearly,
\[
\frac{2t+xy}{xy} = 1+2\frac{t}{xy} \le 3t \max(x^{-1},1)\max(y^{-1},1) 
\]
for all $t\ge 1$ and $x$, $y>0$. Indeed, the latter follows from the weaker estimate
\[
\frac{t}{xy} \le t \max(x^{-1},1)\max(y^{-1},1),\quad t\ge 1,\ x,y>0, 
\]
which is equivalent to $1\le \max(x,1)\max(y,1)$ for all $x$, $y>0$. 
Therefore, 
\[
\left( \frac{2t+xy}{xy}\right)^{l} \le 3 t^{l} \max(x^{-l},1)\max(y^{-l},1),\quad t\ge 1,\ x,y>0,
\]
which proves \eqref{eq:03b}. Remark \ref{rem:01} shows that \eqref{eq:03b} is sharp.
\end{proof}

\section{Proof of Theorem \ref{th:no3}}\label{sec:IV}

Let us consider the following improper integrals:
\begin{align}\label{eq:I1}
I_1(t;x,y)&:=\sqrt{xy}\int_{\R_+} \E^{-\I t k^2} J_{l+\frac{1}{2}}(k x)J_{l+\frac{1}{2}}(k y)\im m_\alpha(k^2)\, k^{-2l}dk,\\
\label{eq:I2}
I_2(t;x,y)&:=\sqrt{xy}\int_{\R_+} \E^{-\I t k^2} J_{l+\frac{1}{2}}(k x)J_{-l-\frac{1}{2}}(k y)\im m_\alpha(k^2)\, kdk,\\
\label{eq:I3}
I_3(t;x,y)&:=\sqrt{xy}\int_{\R_+} \E^{-\I t k^2} J_{-l-\frac{1}{2}}(k x)J_{-l-\frac{1}{2}}(k y)\im m_\alpha(k^2)\, k^{2l+2}dk,
\end{align}
where $x$, $y>0$ and $t\neq 0$. Moreover, here and below we shall use the convention $\im m_\alpha(k^2) := \im m_\alpha(k^2+\I0) = \lim_{\varepsilon\downarrow 0}\im m_\alpha(k^2+\I \varepsilon)$ for all $k\in\R$.  Denote the corresponding integrand by $A_j$, that is, $I_j(t) = \int_{\R_+} \E^{-\I t k^2} A_j(k;x,y)dk$. Our aim is to use Lemma~\ref{lem:vC2} (plus the remarks after this lemma) and
hence we need to show that each $A_j$ belongs to the Wiener algebra $\cW(\R)$, that is, coincide with a function
which is the Fourier transform of a finite measure. 

We also need the following estimates, which follow from \eqref{eq:rho_a} 
\be\label{eq:m_inf}
\im m_\alpha(k^2) = \begin{cases} C_l^2 |k|^{2l+1}, & \alpha=0, \\[2mm] \frac{\cos^2(\pi l)}{C_l^2\sin^2(\alpha)} |k|^{-2l-1} +\OO(|k|^{-4l-2}), & \alpha\neq 0,\end{cases}\quad k\to \infty,
\ee
and 
\be\label{eq:m_at0}
\im m_\alpha(k^2) = \begin{cases} \frac{C_l^2}{\cos(\alpha)^2}|k|^{2l+1} + \OO(|k|^{4l+2}), & \alpha\neq\pi/2 , \\[2mm] C_l^{-2}\cos^2(\pi l) |k|^{-2l-1} , & \alpha = \pi/2,\end{cases}\quad k\to 0.
\ee

\subsection{The integral $I_1$}

Consider the function
\[
J(r) := \sqrt{r}\, J_{l+\frac{1}{2}}(r) = \frac{r^{l+1}}{2^{l+1/2}} \sum_{n=0}^\infty 
	\frac{(-r^2/4)^n}{n!\Gamma(\nu+n+1)}, \quad r\ge 0.
\]
Note that $J(r)\sim r^{l+1}$ as $r\to 0$ and $J(r)= \sqrt{\frac{2}{\pi}} \sin(r-\frac{l\pi}{2}) +O(r^{-1})$
as $r\to+\infty$ (see \eqref{eq:asymp-J-infty}). Moreover, $J'(r)\sim r^l$ as $r\to 0$ and $J'(r)= \sqrt{\frac{2}{\pi}} \cos(r-\frac{l\pi}{2}) +O(r^{-1})$ as $r\to+\infty$ (see \eqref{eq:a11}). In particular, $\ti{J}(r):=J(r)-\sqrt{\frac{2}{\pi}} \sin(r-\frac{l\pi}{2})$ is in $H^1(\R_+)$.
Moreover, we can define $J(r)$ for $r<0$ such that it is locally in $H^1$ and $J(r)=\sqrt{\frac{2}{\pi}} \sin(r-\frac{l\pi}{2})$
for $r<-1$. By construction we then have $\ti{J}\in H^1(\R)$ and thus $\ti{J}$ is the Fourier transform of an integrable
function (see Lemma \ref{lem:a.1}). Moreover, $\sin(r-\frac{l\pi}{2})$ is the Fourier transform of the sum of two Dirac delta measures and so
$J$ is the Fourier transform of a finite measure.
By scaling, the total variation of the measures corresponding to $J(k x)$ is independent of $x$.

Next consider the function
\begin{align*}
F(k) := \frac{\im m_\alpha(k^2)}{ |k|^{2l+1}} = \frac{C_l^2 }{(\cos(\alpha) - C_l^2 \sin(\alpha)\tan(\pi l) |k|^{2l+1})^2 + C_l^4\sin^2(\alpha) |k|^{4l+2}}.
\end{align*}
By Corollary \ref{lem:f_nu}, $F$ is in the Wiener algebra $\cW_0(\R)$.

Now it remains to note that 
\be
I_1(t)= \int_{\R_+} \E^{-\I t k^2} A_1(k^2;x,y)dk = 
\int_{\R_+} \E^{-\I t k^2} J(kx)J(ky) F(k) dk, 
\ee
and applying Lemma \ref{lem:vC2} we end up with the estimate
\be\label{eq:est_I1}
|I_1(t;x,y)| \le C t^{-1/2},\quad t>0,
\ee
with a positive constant $C>0$ independent of $x$, $y>0$.
\subsection{The integral $I_2$} 

Assume first that $l\in (0,1/2)$ and write
\[
A_2(k^2;x,y) = J(k x) Y(k y)  \frac{\chi_l(k)}{\chi_l(k y)} \frac{\im m_\alpha(k^2)}{\chi_l(k)},
\]
where
\[
J(r) =  \sqrt{r}\, J_{l+\frac{1}{2}}(r), 
\quad
Y(r) =  \chi_l(r) \sqrt{r}\, J_{-l-\frac{1}{2}}(r), 
\quad 
\chi_l(r)=\frac{|r|^l}{1+|r|^l}.
\]
The asymptotic behavior \eqref{eq:m_inf} and \eqref{eq:m_at0} of $\im m_\alpha$ shows that 
\[
M(k) = \frac{\im m_\alpha(k^2)}{\chi_l(k)} = \begin{cases} |k|^{1+l}, & k\to 0,\\ |k|^{-2l-1}, & |k|\to \infty,\end{cases}
\]
and hence $M\in H^1(\R)$, which implies that $M$ is in the Wiener algebra $\cW_0(\R)$.

We continue $J(r)$, $Y(r)$ to the region $r<0$ such that they are 
continuously differentiable and satisfy 
\[
J(r)=\sqrt{\frac{2}{\pi}} \sin\left(r-\frac{\pi l}{2}\right),\quad
Y(r)=\sqrt{\frac{2}{\pi}} \cos\left(r+\frac{\pi l}{2}\right),
\]
for $r<-1$. Then $\ti{J}(r):=J(r)-\sqrt{\frac{2}{\pi}} \sin(r-\frac{\pi l}{2})$ 
and $\ti{Y}(r):=Y(r)-\sqrt{\frac{2}{\pi}} \cos\left(r+\frac{\pi l}{2}\right)$ are in 
$H^1(\R)$. In fact, they are continuously differentiable and hence 
it suffices to look at their asymptotic behavior. For $r<-1$ they are 
zero and for $r>1$ they are $O(r^{-1})$ and their derivative is 
$O(r^{-1})$ as can be seen from the asymptotic behavior of Bessel 
functions (see Appendix \ref{sec:Bessel}). Hence both $J$ and $Y$ are Fourier 
transforms of finite measures. By scaling the total variation of the 
measures corresponding to $J(k x)$ and $Y(k y)$ are independent of $x$ and
$y$, respectively. 

It remains to consider the function $\chi_l(k)/\chi_l(ky)$. Observe that
\[
h_{y,l}(k) := 1 - \frac{\chi_l(k)}{\chi_l(k y)} = 1 - \frac{1+|ky|^l}{y^l + |ky|^l}=\frac{1-y^{-l}}{1+|k|^l} = (1-y^{-l})(1-\chi_l(k)).
\]
By Corollary \ref{lem:f_nu}, $1-\chi_l\in \cW_0(\R)$. Therefore, applying Lemma \ref{lem:vC2}, we obtain the following estimate
\be\label{eq:est_I2a}
|I_2(t;x,y)| \le Ct^{-1/2} \max(1,y^{-l}),\quad t>0,
\ee
whenever $l\in (0,1/2)$.

Consider now the remaining case $l\in (-1/2,0]$. Write
\[
A_2(k^2;x,y) = J(k x) Y(k y) \im m_\alpha(k^2),
\]
where
\[
J(r) =  \sqrt{r}\, J_{l+\frac{1}{2}}(r), 
\quad
Y(r) = \sqrt{r}\, J_{-l-\frac{1}{2}}(r).
\]
Noting that $Y(r) \sim r^{-l}$ as $r\to 0$ and using Lemma \ref{lem:a.1}, we can continue $J$ and $Y$ to the region $r<0$ such that both $J$ and $Y$ are Fourier 
transforms of finite measures. 

It remains to consider $\im m_\alpha(k^2)$ given by \eqref{eq:rho_a}. However, by Corollary \ref{lem:f_nu}, this function is in the Wiener algebra $\cW_0(\R)$ and hence applying Lemma \ref{lem:vC2}, we end up with the estimate
\be\label{eq:est_I2b}
|I_2(t;x,y)| \le Ct^{-1/2},\quad t>0,
\ee
whenever $l\in (-1/2,0]$.

\subsection{The integral $I_3$} 

Again let us consider two cases. Assume first that  $l\in (-1/2,0]$ and then write 
\[
A_3(k^2;x,y) = Y(k x) Y(k y)  \im m_\alpha(k^2)k^{2l+1},
\]
where
\[
Y(r) =   \sqrt{r}\, J_{-l-\frac{1}{2}}(r),\quad r>0.
\]
Notice that 
\[
|k|^{2l+1}\im m_\alpha(k^2)=\frac{C_l^2k ^{4l+2}}{(\cos(\alpha) - C_l^2 \sin(\alpha)\tan(\pi l)k^{2l+1})^2 + C_l^4\sin^2(\alpha) k^{4l+2}},
\]
which is the sum of a constant and a function of the form \eqref{eq:f_nu}, and hence it  belongs to the Wiener algebra $\cW(\R)$ by Corollary \ref{lem:f_nu}. Arguing as in the previous subsection and applying Lemma \ref{lem:vC2}, we arrive at the following estimate
\be\label{eq:est_I3a}
|I_3(t;x,y)| \le Ct^{-1/2},\quad t>0,
\ee
whenever $l\in (-1/2,0]$.

If $l\in(0,1/2)$, write
\[
A_3(k^2;x,y) = Y(k x) Y(k y)  \frac{\chi_l(k)}{\chi_l(k x)}\frac{\chi_l(k)}{\chi_l(k y)} \frac{\im m_\alpha(k^2)}{\chi_l^2(k)},
\]
where
\[
Y(r) =  \chi_l(r) \sqrt{r}\, J_{-l-\frac{1}{2}}(r), 
\quad 
\chi_l(r)=\frac{|r|^l}{1+|r|^l}.
\]
Notice that
\begin{align*}
M(k) :=& \frac{\im m_\alpha(k^2)|k|^{2l+1}}{\chi_l^2(k)}  \\
&= \frac{C_l^2|k| ^{2l+2} (1+k^l)^2}{(\cos(\alpha) - C_l^2 \sin(\alpha)\tan(\pi l)|k|^{2l+1})^2 + C_l^4\sin^2(\alpha) |k|^{4l+2}}
\end{align*}
Clearly, by Corollary \ref{lem:f_nu}, $M\in \cW(\R)$. Therefore, similar to  the previous subsection, we end up with the estimate
\be\label{eq:est_I3b}
|I_3(t;x,y)| \le Ct^{-1/2}\max(1,x^{-l})\max(1,y^{-l}),\quad t>0,
\ee
whenever $l\in (0,1/2)$.

\subsection{Proof of Theorem \ref{th:no3}} 

We begin with the representation of the integral kernel of the evolution group.

\begin{lemma}\label{lem:4.1}
Let $|l|<1/2$ and $\alpha\in [0,\pi)$. Then the evolution group $\E^{-\I tH_{\alpha}}P_c(H_\alpha)$ is an integral operator and its kernel is given by
\begin{align}
[\E^{-\I tH_\alpha}P_{c}(H_\alpha)](x,y)
        = \frac{2}{\pi} \int_{\R_+}
       \E^{-\I t k^2}\,\phi_\alpha(k^2,x) \phi_\alpha(k^2,y) \im m_\alpha(k^2) k \,dk, \label{rep}
\end{align}
where the integral is to be understood as an improper integral. 
\end{lemma}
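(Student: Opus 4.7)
The plan is to mimic the proof of Lemma \ref{lem:pi/2}, replacing the ``reference solution'' $\theta$ by the regular solution $\phi_\alpha$ adapted to the boundary condition at $0$, and then applying the functional calculus through the corresponding generalized Fourier transform. Since $\phi_\alpha$ satisfies the boundary condition \eqref{eq:bc_alpha} and $\psi_\alpha$ defined in \eqref{eq:psi_a} is the Weyl solution normalized so that $W(\psi_\alpha,\phi_\alpha)=1$, the spectral transformation
\[
U_\alpha\colon f\mapsto \hat f,\qquad \hat f(\lam):=\int_{\R_+}\phi_\alpha(\lam,x)f(x)\,dx,
\]
extends to a unitary map $L^2(\R_+)\to L^2(\R;d\rho_\alpha)$, where $\rho_\alpha$ is the spectral measure of $H_\alpha$; under $U_\alpha$ the operator $H_\alpha$ becomes multiplication by $\lam$, and $P_c(H_\alpha)$ corresponds to restriction to the absolutely continuous part $d\rho_\alpha^{\mathrm{ac}}$ supported on $[0,\infty)$ and given by \eqref{eq:rho_a}.

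First I would insert a regularization $\eps>0$ and apply the functional calculus to $f\in L^2_c(\R_+)$ to write
\begin{align*}
(\E^{-(\I t+\eps)H_\alpha}P_c(H_\alpha)f)(x)
&=\int_{\R_+}\phi_\alpha(\lam,x)\,\E^{-(\I t+\eps)\lam}\,\hat f(\lam)\,d\rho_\alpha^{\mathrm{ac}}(\lam)\\
&=\int_{\R_+}\!\!\int_{\R_+}\!\!\phi_\alpha(\lam,x)\phi_\alpha(\lam,y)\E^{-(\I t+\eps)\lam}\frac{\im m_\alpha(\lam)}{\pi}\,d\lam\,f(y)\,dy,
\end{align*}
the interchange of the order of integration being justified by Fubini, since for $\eps>0$ the factor $\E^{-\eps\lam}$ together with the decay $\im m_\alpha(\lam)=O(\lam^{-l-1/2})$ as $\lam\to\infty$ (see \eqref{eq:m_inf}) and the polynomial bounds on $\phi_\alpha$ coming from \eqref{eq:phi} and the asymptotics of the Bessel functions produce an absolutely integrable integrand on compact sets in $y$. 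This identifies the integral kernel of $\E^{-(\I t+\eps)H_\alpha}P_c(H_\alpha)$ as the inner double integral.

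Next I would change variables $\lam=k^2$, $d\lam=2k\,dk$, on $\R_+$, turning the kernel into
\[
\frac{2}{\pi}\int_{\R_+}\E^{-(\I t+\eps)k^2}\phi_\alpha(k^2,x)\phi_\alpha(k^2,y)\,\im m_\alpha(k^2)\,k\,dk,
\]
and then let $\eps\downarrow 0$. The pointwise limit in $k$ is clear (with $\im m_\alpha(k^2):=\im m_\alpha(k^2+\I 0)$), and that the limit of the integrals coincides with the improper integral \eqref{rep} in the sense stated in the lemma follows from the decompositions $A_j$ carried out in Sections 4.1--4.3, which already show that each of the three pieces is (up to absolutely integrable error) an oscillatory integral whose regularized values converge to the improper value as $\eps\downarrow 0$. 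Combined with the convergence of the $L^2$-expression on the left-hand side (guaranteed by the spectral theorem), this yields \eqref{rep} for $f\in L^2_c(\R_+)$, and by density for all $f\in L^2(\R_+)$.

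The main obstacle is the passage $\eps\downarrow 0$ in the kernel: the integrand is only conditionally integrable at $k=\infty$, so one cannot appeal to dominated convergence directly. I would handle this by splitting $\phi_\alpha(k^2,\cdot)$ into its oscillatory asymptotic part plus an $H^1$-remainder (exactly as in Sections 4.1--4.3 where the functions $J(r)$, $Y(r)$, $\ti J$, $\ti Y$ appear), so that the oscillatory part produces a well-defined distributional Fourier transform while the remainder is absolutely integrable; the $\eps\downarrow 0$ limit then exists term by term. This is precisely the mechanism that makes the improper integral \eqref{rep} meaningful and that the subsequent van der Corput arguments will exploit.
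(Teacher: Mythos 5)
Your proposal is correct and follows essentially the same route as the paper: regularize with $\E^{-(\I t+\eps)\lam}$, identify the kernel of the regularized evolution via the generalized Fourier transform built from $\phi_\alpha$ (the paper packages this step as Lemma \ref{lem:c1}), and pass to the limit $\eps\downarrow 0$ using the fact that the integrand lies in the Wiener algebra $\cW(\R)$ (the paper invokes Lemma \ref{lem:c.2} for the convergence of the kernels, which is the same mechanism as your oscillatory-plus-$H^1$ splitting), together with strong operator convergence to identify the limit kernel. No essential difference in method.
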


\begin{proof}
By \eqref{eq:phi} and \eqref{eq:phi_a}, 
\begin{align*}
\phi_\alpha(k^2,x) &= \cos(\alpha) \phi(k^2,x) + \sin(\alpha) \theta(k^2,x) \\
&= \sqrt{\frac{\pi x}{2}} \left(C_l^{-1}\cos(\alpha) k^{-l-1/2}  
	  J_{l+\frac{1}{2}}(k x) + C_lk^{l+1/2}\frac{\sin(\alpha)}{\cos(\pi l)} 
		J_{-l-\frac{1}{2}}(k x)\right),
\end{align*}
and hence
\begin{align} \label{formula1}
\phi_\alpha(k^2,x)\phi_\alpha(k^2,y) &= \frac{\pi}{2}\sqrt{xy} \left(\frac{\cos^2(\alpha)}{C_l^2} k^{-2l-1} J_{l+\frac{1}{2}}(k x)J_{l+\frac{1}{2}}(k y) \right. \\
 &+ \frac{\sin(2\alpha)}{2\cos(\pi l)} (J_{l+\frac{1}{2}}(k x)J_{-l-\frac{1}{2}}(k y) + J_{-l-\frac{1}{2}}(k x)J_{l+\frac{1}{2}}(k y)) \\
 &\left. + C_l^2k^{2l+1} \frac{\sin^2(\alpha)}{\cos^2(\pi l)} 
		J_{-l-\frac{1}{2}}(k x)J_{-l-\frac{1}{2}}(k y)\right).
\end{align}
By our considerations in the previous subsections, we have
\[
\phi_\alpha(k^2,x) \phi_\alpha(k^2,y) \im m_\alpha(k^2) k \in \cW(\R)
\]
with norm uniformly bounded for $x,y$ restricted to any compact subset of $(0,\infty)$.
Moreover, we have $\E^{-\I (t - \I\eps) H_\alpha}P_{c}(H_\alpha) \to \E^{-\I tH_\alpha}P_{c}(H_\alpha)$
as $\eps\downarrow 0$ in the strong operator topology. By Lemma \ref{lem:c1}, $\E^{-\I (t - \I\eps) H_\alpha}P_{c}(H_\alpha)$ is an integral operator for all $\eps>0$ and, moreover, the kernel converges uniformly
on compact sets by Lemma \ref{lem:c.2}. Hence $\E^{-\I tH_\alpha}P_{c}(H_\alpha)$ is an
integral operator whose kernel is given by the limits of the kernels of the approximating operators,
that is, by \eqref{rep}.
\end{proof}

\begin{proof}[Proof of Theorem \ref{th:no3}] Combining \eqref{eq:est_I1}, \eqref{eq:est_I2a}, \eqref{eq:est_I2b}, \eqref{eq:est_I3a} and \eqref{eq:est_I3b}, we arrive at the following decay estimate for the kernel of the evolution group
\be\label{eq:est}
\Big| [\E^{-\I tH_\alpha}P_{c}(H_\alpha)](x,y)\Big| \le C t^{-1/2} \times \begin{cases} 1, & l \in (-1/2,0],\\ \max(1,x^{-l})\max(1,y^{-l}), & l\in (0,1/2).\end{cases}
\ee
This completes the proof of Theorem \ref{th:no3}.
\end{proof}

\appendix

\section{Bessel functions}\label{sec:Bessel}

Here we collect basic formulas and information on Bessel functions (see, e.g., \cite{dlmf, Wat}).
We start with the definition:
\begin{align}
J_{\nu}(z) 
= \left(\frac{z}{2}\right)^\nu \sum_{n=0}^\infty 
	\frac{(-z^2/4)^n}{n!\Gamma(\nu+n+1)}.\label{eq:Jnu01}
\end{align}
The asymptotic behavior as $|z|\to\infty$ is given by 
\begin{align}
J_{\nu}(z) 
	= \sqrt{\frac{2}{\pi z}}\left(\cos(z- \nu \pi/2 - \pi/4) 
	   + \E^{|\im z|}\OO(|z|^{-1})\right),
	   \quad |\arg z|<\pi. \label{eq:asymp-J-infty}
\end{align}
Noting that 
\be\label{eq:recurrence}
J_{\nu}'(z) = -J_{\nu+1}(z) + \frac{\nu}{z} J_{\nu}(z) =J_{\nu-1}(z) - \frac{\nu}{z} J_{\nu}(z) ,
\ee
one can show that the derivative of the reminder satisfies
\be\label{eq:a11}
\left(\sqrt{\frac{\pi z}{2}}J_{\nu}(z) - \cos(z-\frac{1}{2}\nu \pi - \frac{1}{4}\pi)\right)'= \E^{|\im z|}\OO(|z|^{-1}),\quad |z|\to \infty.
\ee

\section{The van der Corput Lemma and the Wiener algebra}\label{sec:vdCorput}

We will need the classical van der Corput lemma (see, e.g., \cite[page 334]{St}):

\begin{lemma}\label{lem:vC}
Consider the oscillatory integral
\begin{equation*}
I(t) = \int_a^b \E^{\I t k^2 + \I c k} A(k) dk.
\end{equation*}
If $A\in\mathrm{AC}(a,b)$, then
\begin{equation*}
\abs{I(t)} 
	\le C_2 \abs{t}^{-1/2}(\norm{A}_\infty + \norm{A'}_1), 
	\quad \abs{t}\ge 1,
\end{equation*}
where $C_2\le 2^{8/3}$ is a universal constant.
\end{lemma}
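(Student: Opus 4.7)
The plan is to reduce to a pure Gaussian phase, establish the key $|t|^{-1/2}$ bound for the bare oscillatory integral, and then transfer the decay to the general amplitude $A$ via Abel-type integration by parts.

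First, completing the square gives $tk^2 + ck = t\bigl(k + \frac{c}{2t}\bigr)^2 - \frac{c^2}{4t}$. The unimodular factor $\E^{-\I c^2/(4t)}$ pulls outside the integral, and the substitution $u = k + c/(2t)$ merely translates the interval of integration while preserving both $\norm{A}_\infty$ and $\norm{A'}_1$. Hence it suffices to prove the estimate in the reduced case $c = 0$.

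Next comes the core oscillatory bound
\[
\sup_{\al,\beta\in\R}\left|\int_\al^\beta \E^{\I t k^2}\,dk\right| \le C_1 |t|^{-1/2}, \quad |t|\ge 1,
\]
for a universal $C_1$. The only stationary point of $\phi(k)=tk^2$ is $k=0$ and $|\phi''|=2|t|$ is bounded away from zero, so the classical two-scale argument applies. Fix $\delta>0$; bound the piece of $[\al,\beta]\cap[-\delta,\delta]$ trivially by $2\delta$, and on each of the two tails (where $|\phi'(k)|=2|t||k|\ge 2|t|\delta$) write $\E^{\I t k^2}=\frac{1}{2\I t k}(\E^{\I t k^2})'$ and integrate by parts. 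Both the boundary terms and the residual integral $\int \E^{\I t k^2}/(2\I t k^2)\,dk$ are of size $(|t|\delta)^{-1}$. Choosing $\delta=|t|^{-1/2}$ balances the two scales and produces the required $|t|^{-1/2}$ rate.

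Finally, set $F(k):=\int_a^k \E^{\I t s^2}\,ds$, which by the previous step is uniformly bounded by $C_1|t|^{-1/2}$ on $[a,b]$. Since $A\in\mathrm{AC}(a,b)$, integration by parts gives
\[
I(t) = F(b)A(b) - \int_a^b F(k)A'(k)\,dk,
\]
whence $|I(t)|\le C_1|t|^{-1/2}\bigl(\norm{A}_\infty+\norm{A'}_1\bigr)$, which is the claim. The main obstacle is the uniform nature of the core estimate: the constant $C_1$ must be independent of the endpoints $\al,\beta\in\R$, which forces a case analysis according to whether the stationary point $0$ lies inside, to the left of, or to the right of $[\al,\beta]$. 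In the latter two cases the argument is simpler (a single integration by parts with no inner window), but one must verify that the resulting bound fits into the same universal $C_1$; tracking this optimization along with the precise numerical constants from the three constituent terms (inner window, boundary contributions, and tail integral) is what produces the explicit bound $C_2\le 2^{8/3}$ stated in the lemma.
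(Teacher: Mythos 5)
Your argument is correct: the paper does not prove this lemma but simply cites it as the classical van der Corput lemma from Stein, and your proof (complete the square to remove the linear term, establish the uniform $C_1|t|^{-1/2}$ bound for $\sup_{\al,\beta}\bigl|\int_\al^\beta \E^{\I tk^2}dk\bigr|$ by the standard split into a window of width $\delta=|t|^{-1/2}$ around the stationary point plus integration by parts on the tails, then transfer to general $A\in\mathrm{AC}$ via $I(t)=F(b)A(b)-\int_a^b F A'$) is exactly the standard proof in that reference. The only loose end is that you sketch rather than carry out the numerical optimization yielding $C_2\le 2^{8/3}$, but the balancing $2\delta + O((|t|\delta)^{-1})$ you describe does produce a constant below that threshold.
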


Note that we can apply the above result with $(a,b)=(-\infty,\infty)$
by considering the limit $(-a,a)\to(-\infty,\infty)$.

Our proof will be based on the following variant of the van der Corput 
lemma (see, e.g., \cite[Lemma A.2]{ktt}).

\begin{lemma}\label{lem:vC2}
Let $(a,b)\subseteq\R$ and consider the oscillatory integral
\[
I(t) = \int_a^b \E^{\I t k^2} A(k) dk.
\]
If $A \in \cW(\R)$, i.e., $A$ is the Fourier transform of a signed measure
\[
A(k) = \int_\R \E^{\I k p} d\alpha(p),
\]
then the above integral exists as an improper integral and satisfies
\begin{equation*}
\abs{I(t)} 
	\le C_2 \abs{t}^{-1/2} \norm{A}_\cW, 
	\quad \abs{t}>0.
\end{equation*}
where $\norm{A}_\cW:=\norm{\alpha}=\abs{\alpha}(\R)$ denotes the total variation of $\alpha$ and
$C_2$ is the constant from the van der Corput lemma.
\end{lemma}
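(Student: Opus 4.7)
The plan is to reduce the claim to the classical van der Corput lemma (Lemma~\ref{lem:vC}) by inserting the Fourier representation of $A$ and interchanging the order of integration. Writing $A(k)=\int_\R\E^{\I kp}\,d\alpha(p)$ turns the inner integral into $\int\E^{\I(tk^2+pk)}\,dk$, which is exactly the oscillatory integral with amplitude identically one that Lemma~\ref{lem:vC} handles; the bound it delivers is uniform in the linear coefficient $p$ since $\norm{1}_\infty=1$ and $\norm{1'}_1=0$.

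First I would verify that the inner oscillatory integral $J(t,p):=\int_a^b\E^{\I tk^2+\I pk}\,dk$ exists as an improper integral and satisfies $\abs{J(t,p)}\le C_2\abs{t}^{-1/2}$ uniformly in $p\in\R$. For bounded truncations $J_N(t,p):=\int_{(a,b)\cap(-N,N)}\E^{\I tk^2+\I pk}\,dk$ the estimate follows directly from Lemma~\ref{lem:vC} with amplitude $A\equiv 1$, and the constant $C_2$ is universal so it does not depend on $N$ or $p$. Convergence of $J_N(t,p)$ as $N\to\infty$ when $(a,b)$ is unbounded follows from the Cauchy criterion via a routine integration-by-parts argument outside a neighborhood of the stationary point $k=-p/(2t)$, and the uniform-in-$N$ bound then passes to the limit.

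Next I would apply Fubini on the truncated rectangle $((a,b)\cap(-N,N))\times\R$, which is trivial since this $k$-interval has finite length, the integrand has modulus one, and $\abs{\alpha}(\R)=\norm{A}_\cW<\infty$. This yields
\[
\int_{(a,b)\cap(-N,N)}\E^{\I tk^2}A(k)\,dk=\int_\R J_N(t,p)\,d\alpha(p).
\]
I would then let $N\to\infty$ on the right by dominated convergence with the $\abs{\alpha}$-integrable dominator $C_2\abs{t}^{-1/2}$. This simultaneously shows that the improper integral $I(t)$ on the left exists and yields the identity $I(t)=\int_\R J(t,p)\,d\alpha(p)$. Taking absolute values inside the outer integral and invoking the uniform bound on $J(t,p)$ finishes the argument:
\[
\abs{I(t)}\le C_2\abs{t}^{-1/2}\abs{\alpha}(\R)=C_2\abs{t}^{-1/2}\norm{A}_\cW.
\]

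I do not anticipate any serious obstacle: all the nontrivial oscillatory analysis is encapsulated in Lemma~\ref{lem:vC}, and everything else is a routine Fubini and dominated-convergence argument. The only mildly delicate point is the truncation needed when $(a,b)$ is unbounded, which is handled by the standard Cauchy-criterion argument sketched above. The whole purpose of this variant is to replace the $\mathrm{AC}$-norm $\norm{A}_\infty+\norm{A'}_1$ by the weaker Wiener-algebra norm $\norm{A}_\cW$, which is precisely what makes the estimates in Section~\ref{sec:IV} feasible.
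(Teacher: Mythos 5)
Your argument is correct and is essentially the proof the paper relies on: the lemma is quoted from \cite[Lemma A.2]{ktt}, whose proof is exactly this Fubini interchange combined with the amplitude-free van der Corput bound $\bigl|\int_a^b \E^{\I tk^2+\I pk}\,dk\bigr|\le C_2\abss{t}^{-1/2}$ uniformly in $p$. Your added care with the truncation, the Cauchy criterion off the stationary point, and dominated convergence correctly justifies the existence of the improper integral and the passage to the limit.
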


In this respect we note that if $A_1$ and $A_2$ are two such functions, then 
(cf.\ p. 208 in \cite{bo})
\[
(A_1 A_2)(k)= \frac{1}{(2\pi)^2} \int_\R \E^{\I k p} d(\alpha_1*\alpha_2)(p)
\]
is associated with the convolution
\[
\alpha_1 * \alpha_2(\Omega) = \iint \id_\Omega(x+y) d\alpha_1(x) d\alpha_2(y),
\]
where $\id_\Omega$ is the indicator function of a set $\Omega$. 
Note that
\[
\|\alpha_1 * \alpha_2\| \le \|\alpha_1\| \|\alpha_2\|.
\]

Let $\cW_0(\R)$ be the Wiener algebra of functions $C(\R)$ which are Fourier transforms of $L^1$ functions,
\[
\cW_0(\R) = \Big\{f\in C(\R):\, f(k)=\int_{\R}\E^{\I kx} g(x)dx,\ g\in L^1(\R)\Big\}.
\]
Clearly, $\cW_0(\R)\subset \cW(\R)$. Moreover, by the Riemann--Lebesgue lemma, $f\in C_0(\R)$, that is, $f(k)\to 0$ as $k\to \infty$ if $f\in\cW_0(\R)$. A comprehensive survey of necessary and sufficient conditions for $f\in C(\R)$ to be in the Wiener algebras $\cW_0(\R)$ and $\cW(\R)$ can be found in \cite{lst12}, \cite{lt11}.  We need the following statements. 

\begin{lemma}\label{lem:a.1} 
If $f\in L^2(\R)$ is locally absolutely continuous and $f'\in L^p(\R)$ with $p\in (1,2]$, then $f$ is in the Wiener algebra $\cW_0(\R)$ and
\be\label{eq:embedding}
\|f\|_{\cW} \le C_p\big( \|f\|_{L^2(\R)} + \|f'\|_{L^p(\R)}\big),
\ee
where $C_p>0$ is a positive constant, which depends only on $p$.
\end{lemma}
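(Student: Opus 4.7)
The plan is to show that the Fourier transform $\hat f$ lies in $L^1(\R)$, since then by Fourier inversion $f$ is (up to a $2\pi$ factor) the Fourier transform of the $L^1$ function $\hat f$, hence lies in $\cW_0(\R)$ with $\|f\|_{\cW}$ controlled by $\|\hat f\|_{L^1(\R)}$. The estimate of $\|\hat f\|_{L^1}$ will be obtained by splitting into low and high frequencies.

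First, because $f\in L^2(\R)$, Plancherel gives $\hat f\in L^2(\R)$ with $\|\hat f\|_{L^2}=(2\pi)^{1/2}\|f\|_{L^2}$. On the low-frequency region $\{|k|\le 1\}$ I would simply apply Cauchy--Schwarz:
\[
\int_{|k|\le 1}|\hat f(k)|\,dk \;\le\; \sqrt{2}\,\|\hat f\|_{L^2(\R)} \;\le\; C\,\|f\|_{L^2(\R)}.
\]

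For the high-frequency region I would use the hypothesis $f'\in L^p(\R)$. Since $f$ is locally absolutely continuous and $f,f'$ are tempered distributions, the distributional identity $\widehat{f'}(k)=\I k\,\hat f(k)$ holds. By the Hausdorff--Young inequality (valid precisely for $p\in[1,2]$), $\widehat{f'}\in L^{p'}(\R)$ with $\|\widehat{f'}\|_{L^{p'}}\le C\|f'\|_{L^p}$, where $p'=p/(p-1)\in[2,\infty)$. Writing $|\hat f(k)|=|k|^{-1}|k\hat f(k)|$ on $\{|k|>1\}$ and applying H\"older with exponents $p$ and $p'$ (which are conjugate because $p>1$),
\[
\int_{|k|>1}|\hat f(k)|\,dk
\;\le\; \Bigl(\int_{|k|>1}|k|^{-p}dk\Bigr)^{\!1/p}\,\|k\hat f(k)\|_{L^{p'}}
\;=\; \Bigl(\tfrac{2}{p-1}\Bigr)^{\!1/p}\,\|\widehat{f'}\|_{L^{p'}}
\;\le\; C_p\,\|f'\|_{L^p(\R)}.
\]
The condition $p>1$ is exactly what makes $|k|^{-p}$ integrable at infinity, which is the only place in the argument where strict inequality is needed; this is the essential (and only mildly delicate) step.

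Combining the two estimates gives $\hat f\in L^1(\R)$ with $\|\hat f\|_{L^1}\le C_p(\|f\|_{L^2}+\|f'\|_{L^p})$. Fourier inversion then represents $f$ as the Fourier transform of the integrable function $\hat f/(2\pi)$, which exhibits $f\in\cW_0(\R)$ and yields the claimed bound on $\|f\|_{\cW}$ upon absorbing the $2\pi$ into $C_p$. The main (and really only) obstacle is remembering that Hausdorff--Young provides the required $L^{p'}$ control on $k\hat f(k)$ without any extra regularity assumption on $f'$ beyond membership in $L^p$.
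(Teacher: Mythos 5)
Your proof is correct and follows essentially the same route as the paper's: split $\hat f$ into low frequencies (controlled by Cauchy--Schwarz and Plancherel via $\|f\|_{L^2}$) and high frequencies (controlled by Hausdorff--Young applied to $f'$ followed by H\"older, using $p>1$ for the integrability of $|k|^{-p}$ at infinity). The only cosmetic difference is that the paper weights by $(1+|\lambda|)^{-1}$ rather than $|\lambda|^{-1}$ on $\{|\lambda|>1\}$, which changes nothing.
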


\begin{proof}
Since the Fourier transform is unitary on $L^2(\R)$, it suffices to show that $\hat{f}\in L^1(\R)$.  First of all, the Cauchy--Schwarz inequality implies $\hat{f} \in L^1_{\loc}(\R)$ and, in particular,
\be
\int_{-1}^1 |\hat{f}(\lambda)| d\lambda \le \sqrt{2} \left(\int_{-1}^1 |\hat{f}(\lambda)|^{1/2} d\lambda\right)^2 \le \sqrt{2} \|f\|_{L^2(\R)}.
\ee

On the other hand, $f'\in L^p(\R)$ and hence the Hausdorff--Young inequality implies $\lambda \hat{f}(\lambda) \in L^q(\R)$ with $1/p+1/q=1$. Applying the H\"older inequality and then the Hausdorff--Young inequality once again, we get
\begin{align*}
\int_{|\lam|>1} |\hat{f}(\lam)| d\lam &\le 2\int_{|\lam|>1} \frac{1}{1+|\lam|} |\lam \hat{f}(\lam)| d\lam \\
&\le 2 \left( \int_{\R} \frac{1}{(1+|\lam|)^{p}}d\lam\right)^{1/p} \left( \int_{\R} |\lam \hat{f}(\lam)|^qd\lam\right)^{1/q} \le C_p' \|f'\|_{L^p(\R)},
\end{align*}
which completes the proof.
\end{proof}

\begin{remark}
The case $p=2$ is due to Beurling \cite[Theorem 5.3]{lst12}. A similar result was obtained by S.\ G.\ Samko. Namely, if $f\in L^1(\R) \cap AC_{\loc}(\R)$ is such that $f$, $f' \in L^p(\R)$ with some $p\in (1,2]$, then $f\in \cW_0(\R)$ (see Theorem 6.8 in \cite{lst12}).
\end{remark}

The next result is also due to Beurling (see, e.g., Theorem 5.4 in \cite{lst12}).

\begin{theorem}[Beurling]\label{th:beurling}
Let $f\in C_0(\R)$ be even and $f$, $f' \in AC_{\loc}(\R)$. If 
\be\label{eq:beurling}
C:=\int_{\R_+} k|f''(k)|dk <\infty,
\ee
then $f\in\cW_0(\R)$ and $\|f\|_\cW\le C$.
\end{theorem}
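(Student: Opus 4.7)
The plan is to represent $f$ as a continuous superposition of ``tent'' functions $T_u(k):=(u-|k|)_+$ weighted by $f''(u)\,du$, and then bound the Wiener norm of $f$ by integrating the (easily computed) Wiener norms of the tents.

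First, I would establish the representation
\[
f(k) = \int_0^\infty (u-|k|)_+\, f''(u)\, du, \qquad k\in\R.
\]
The hypothesis $\int_0^\infty k\,|f''(k)|\,dk<\infty$ gives $f''\in L^1([A,\infty))$ for every $A>0$, so $f'$ has a finite limit at $+\infty$; since $f\in C_0(\R)$, this limit must equal $0$. Combined with $f'(0)=0$ (forced by $f$ even, $f'$ odd) this lets me write $f'(s)=-\int_s^\infty f''(u)\,du$ for $s\ge 0$ and then $f(k)=-\int_k^\infty f'(s)\,ds=\int_k^\infty (u-k)\,f''(u)\,du$ by Fubini, for $k\ge 0$; evenness of $f$ extends this to all $k\in\R$ with $|k|$ in place of $k$.

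Next, for each $u>0$ the tent $T_u$ is (up to normalization) the Fej\'er kernel, and a direct computation yields
\[
T_u(k) = \int_\R \E^{\I k\xi}\,\frac{2\sin^2(u\xi/2)}{\pi\xi^2}\,d\xi,
\]
so $T_u\in\cW_0(\R)$ with $\|T_u\|_\cW = u$, using the classical identity $\int_\R \xi^{-2}\sin^2(a\xi)\,d\xi = \pi a$.

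Substituting this Fourier representation into the superposition formula for $f$ and interchanging the order of integration, which is justified by Tonelli since
\[
\int_0^\infty\!\int_\R \frac{2\sin^2(u\xi/2)}{\pi\xi^2}\,|f''(u)|\,d\xi\,du = \int_0^\infty u\,|f''(u)|\,du = C,
\]
I obtain $f(k)=\int_\R \E^{\I k\xi}G(\xi)\,d\xi$ with $\|G\|_{L^1}\le C$, which is precisely $f\in\cW_0(\R)$ with $\|f\|_\cW\le C$. The main obstacle I anticipate lies in Step~1: one must rule out a nonzero limit of $f'$ at infinity in order to kill the boundary terms in the integration by parts, and this is exactly where the $C_0$ assumption on $f$ has to be combined with the weighted integrability of $f''$; after that, the argument is a triangle-inequality computation with an entirely explicit kernel.
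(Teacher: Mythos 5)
Your argument is correct and complete. Note, however, that the paper does not prove this statement at all: it is quoted as Beurling's theorem with a reference to the survey of Liflyand--Samko--Trigub (Theorem 5.4 in \cite{lst12}), so there is no in-paper proof to compare against. What you have written is essentially the classical proof of Beurling's criterion: the two-fold integration by parts yields the tent decomposition $f(k)=\int_0^\infty (u-|k|)_+ f''(u)\,du$, each tent is the Fourier transform of a scaled Fej\'er kernel with $\norm{T_u}_{\cW}=u$, and Tonelli converts the superposition into an explicit $L^1$ density $G$ with $\norms{G}_{L^1}\le C$. The key analytic points are all present and correctly handled: $f'\in AC_{\loc}$ gives $f''\in L^1_{\loc}$ so there is no issue near the origin; the weighted condition $\int_{\R_+}k\abss{f''(k)}\,dk<\infty$ gives $f''\in L^1([A,\infty))$, hence a finite limit of $f'$ at infinity, which must vanish because $f\in C_0$; and the same weighted condition justifies both Fubini applications. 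The remark that $f'(0)=0$ by evenness is harmless but not actually needed, since the representation is only required for $k>0$ and then extended by continuity and evenness. In short: a correct, self-contained, elementary proof of a result the paper only cites.
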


Consider the following functions, which appear in Section \ref{sec:IV}:
\begin{align}
\chi_l(k) &= \frac{|k|^l}{1+|k|^l},\quad l>0, \label{eq:chi} \\
f_{l,p}(k) &= \frac{|k|^p}{a+ b|k|^l+|k|^{2l}},\quad 2l>p\ge 0, \label{eq:f_nu}
\end{align}
 where $a$, $b\in\R$ are such that $a+ b|k|^p+|k|^{2p}>0$ for all $k\in\R$. 
As an immediate corollary of Beurling's result we get 

\begin{corollary}\label{lem:f_nu}
$\chi_l \in \cW(\R)$, $1-\chi_l\in\cW_0(\R)$, and  $ f_{l,p}\in \cW_0(\R)$.
\end{corollary}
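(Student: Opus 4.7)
The plan is to derive all three statements from a single application of Theorem \ref{th:beurling}. Once $1-\chi_l \in \cW_0(\R)$ is established, the claim $\chi_l \in \cW(\R)$ follows immediately from $\chi_l = 1-(1-\chi_l)$: the constant $1$ is the Fourier transform of a Dirac mass and hence lies in $\cW(\R)$, which contains $\cW_0(\R)$ and is closed under addition.

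For $g(k):=1-\chi_l(k)=(1+|k|^l)^{-1}$, evenness, continuity on $\R$, and decay to $0$ at infinity are immediate for $l>0$, while smoothness on $(0,\infty)$ is automatic. It therefore suffices to check $\int_0^\infty k|g''(k)|\,dk<\infty$. A direct computation on $(0,\infty)$ gives
\[
g''(k) = -\frac{l(l-1)k^{l-2}}{(1+k^l)^2} + \frac{2l^2 k^{2l-2}}{(1+k^l)^3},
\]
whence $k|g''(k)|=O(k^{l-1})$ as $k\to 0^+$ and $O(k^{-l-1})$ as $k\to\infty$, both integrable for any $l>0$. The same recipe handles $f_{l,p}$: evenness, continuity, smoothness on $(0,\infty)$, and $C_0(\R)$-decay all follow from $2l>p\ge 0$ and the positivity of the denominator, since $f_{l,p}(k)\sim|k|^{p-2l}$ at infinity. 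At infinity one obtains $k|f_{l,p}''(k)|=O(k^{p-2l-1})$, integrable thanks to $p<2l$; near the origin a routine Taylor expansion of the denominator produces a bound $O(k^{p-1})$ when $p>0$ and $O(k^{l-1})$ when $p=0$, both integrable for $p\ge 0$ and $l>0$.

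The one subtle point I expect is that for $l<1$ the derivative of $g$ (and of $f_{l,p}$ when $p$ is small) is unbounded at $k=0$, so the $AC_{\mathrm{loc}}(\R)$-hypothesis of Theorem \ref{th:beurling} is satisfied only away from the origin. This is harmless: the functions are even and smooth on $(0,\infty)$, the sole quantitative input is the integral $\int_0^\infty k|f''(k)|\,dk<\infty$, and the weight $k$ exactly absorbs the integrable singularity, so either a mild re-reading of Beurling's criterion for even functions or a standard approximation argument (replacing $|k|^l$ by $(\eps^2+k^2)^{l/2}$ and passing $\eps\downarrow 0$) delivers the conclusion.
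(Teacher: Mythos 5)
Your proof is correct and follows the paper's (unwritten) argument exactly: the paper simply declares the corollary an immediate consequence of Beurling's test (Theorem~\ref{th:beurling}), and you supply precisely the verification of its hypotheses, including the reduction $\chi_l = 1-(1-\chi_l)$ with the Dirac mass accounting for the constant $1$. Your concern about $AC_{\loc}$ at the origin is well placed but resolves as you suspect: in Beurling's theorem as stated in \cite{lst12}, the absolute continuity of $f$ and $f'$ is only needed on $(0,\infty)$ for an even function, the weight $k$ in \eqref{eq:beurling} absorbing the integrable singularity of $f''$ at $0$.
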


\section{Integral kernels}\label{sec:kernel}

There are various criteria for operators in $L^p$ spaces to be integral operators (see, e.g., \cite{buk}). Below we present a simple sufficient condition on a function $K$ for $K(H)$ to be an integral operator, where $H$ is a one-dimensional Schr\"odinger operator. More precisely, 
let $H$ be a singular Schr\"odinger operator on $L^2(a,b)$ as in \cite{kst2} or \cite{kt2} with corresponding entire system of solutions
$\theta(z,x)$ and $\phi(z,x)$. Recall
\be
(H-z)^{-1} f(x) = \int_a^b G(z,x,y) f(y) dy,
\ee
where
\be\label{defgf}
G(z,x,y) = \begin{cases} \phi(z,x) \psi(z,y), & y\ge x,\\
\phi(z,y) \psi(z,x), & y\le x,\end{cases}
\ee
is the Green function of $H$ and $\psi(z,x)$ is the Weyl solution normalized by $W(\theta,\psi) =1$ (cf.\ \cite[Lem.~9.7]{tschroe}). We start with a simple lemma ensuring that a function
$K(H)$ is an integral operator. To this end recall that $K(H)$ is defined as $U^{-1} K U$ with $K$ the multiplication operator
in $L^2(\R,d\rho)$, $\rho$ the associates spectral measure, and $U: L^2(a,b)\to L^2(\R,d\rho)$ the spectral transformation
\be
(U f)(\lam) = \int_a^b \phi(\lam,x) f(x) dx.
\ee

\begin{lemma}\label{lem:c1}
Suppose $H$ is bounded from below and $|K(\lam)| \le C(1+|\lam|)^{-1}$ or otherwise $|K(\lam)| \le C(1+|\lam|)^{-2}$. Then $K(H)$ is an integral operator
\be
(K(H) f)(x) = \int_a^b K(x,y) f(y) dy,
\ee
with kernel
\be
K(x,y) = \int_\R K(\lam) \phi(\lam,x) \phi(\lam,y) d\rho(\lam).
\ee
In particular, $(1+|.|)^{-1/2} \phi(.,x) \in L^2(\R,d\rho)$ and $K(x,.)\in L^2(a,b)$ for every $x\in(a,b)$.
\end{lemma}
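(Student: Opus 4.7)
The plan is to conjugate $K(H)$ to multiplication by $K(\lambda)$ via the spectral transformation $U \colon L^2(a,b)\to L^2(\R,d\rho)$, unfold its action on a test function as an iterated integral, and apply Fubini after a weighted $L^2$--bound on $\phi(\cdot,x)$. The key quantitative input is an a priori bound on $\phi(\lambda,x)$ in $\lambda$, extracted as follows. When $H$ is bounded from below, pick $E>0$ with $-E<\inf\sigma(H)$; then $(H+E)^{-1}$ is a positive bounded operator with finite diagonal kernel $G(-E,x,x)=\phi(-E,x)\psi(-E,x)$, and the spectral representation reads
\[
G(-E,x,x) \;=\; \int_\R \frac{\phi(\lambda,x)^2}{\lambda+E}\, d\rho(\lambda).
\]
Since $\lambda+E \asymp 1+|\lambda|$ on $\supp d\rho$, this yields $(1+|\cdot|)^{-1/2}\phi(\cdot,x) \in L^2(\R,d\rho)$. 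In the alternate case (general $H$, faster decay of $K$), the same reasoning applied to $\im G(\I,x,x) = \int |\phi(\lambda,x)|^2/(\lambda^2+1)\, d\rho(\lambda) < \infty$ gives instead the weaker weight $(1+|\cdot|)^{-1}\phi(\cdot,x)\in L^2(\R,d\rho)$.

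\textbf{Kernel via Cauchy--Schwarz.} Define $K(x,y)$ by the formula in the statement. In case (a), the bound $|K(\lambda)|\le C(1+|\lambda|)^{-1}$ allows the split
\[
|K(\lambda)\phi(\lambda,x)\phi(\lambda,y)| \;\le\; C\,\frac{|\phi(\lambda,x)|}{\sqrt{1+|\lambda|}}\cdot\frac{|\phi(\lambda,y)|}{\sqrt{1+|\lambda|}},
\]
so Cauchy--Schwarz in $L^2(\R,d\rho)$ combined with the weighted bound above yields absolute convergence, with a majorant that is locally uniform in $x,y$. Case (b) is handled identically, with $(1+|\lambda|)^{-1}$ replacing $(1+|\lambda|)^{-1/2}$ in each factor. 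The very same estimate also shows $K(\cdot)\phi(\cdot,x)\in L^2(\R,d\rho)$, whence $K(x,\cdot)=U^{-1}(K(\cdot)\phi(\cdot,x))\in L^2(a,b)$.

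\textbf{Identification of $K(H)$.} For $f\in L^2(a,b)$ of compact support, write $K(H)f = U^{-1}(K\cdot Uf)$ with $(Uf)(\lambda)=\int_a^b \phi(\lambda,y)f(y)\,dy$, so that
\[
(K(H)f)(x) \;=\; \int_\R\!\int_a^b K(\lambda)\phi(\lambda,x)\phi(\lambda,y)f(y)\, dy\, d\rho(\lambda).
\]
The previous majorant multiplied by $|f(y)|$ (supported on a compact set) is jointly integrable, so Fubini gives $(K(H)f)(x)=\int_a^b K(x,y)f(y)\,dy$; density of compactly supported functions in $L^2(a,b)$ together with the $L^2$--bound on $K(x,\cdot)$ extends the identity to all of $L^2(a,b)$. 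The most delicate point is making rigorous the spectral-integral expression for the diagonal Green's function in the singular Sturm--Liouville framework of \cite{kst2,kt2}, where $\phi(\lambda,x)$ is entire in $\lambda$ but may grow; this is standard once the point evaluation at $x$ is approximated by compactly supported $L^2$--functions and one passes to the limit inside the spectral representation of $(H+E)^{-1}$.
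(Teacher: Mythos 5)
Your argument is correct and takes essentially the same route as the paper: the key input is the weighted bound $(1+|\cdot|)^{-1/2}\phi(\cdot,x)\in L^2(\R,d\rho)$ in the semibounded case (resp.\ weight $(1+|\cdot|)^{-1}$ in general), followed by Cauchy--Schwarz and Fubini. The paper obtains that bound from $G(z;x,\cdot)$ lying in the form domain of $H$ (resp.\ merely in $L^2(a,b)$) combined with $(UG(z;x,\cdot))(\lambda)=\phi(\lambda,x)/(z-\lambda)$, which is exactly the rigorous version of your diagonal identity $G(-E,x,x)=\int_\R\phi(\lambda,x)^2(\lambda+E)^{-1}\,d\rho(\lambda)$, so the two derivations coincide in substance.
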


\begin{proof}
Note that (cf. \cite[Lemma 3.6]{kst2})
\[
(UG(z;x,.))(\lam) = \frac{\phi(\lam,x)}{z-\lam}.
\]
If $H$ is bounded from below then $G(z;x,.)$ is in the form domain of $H$ for fixed $x$ and every $z\in \C\setminus \sigma(H)$ (cf.\ \cite[(A.6)]{gst}) and we obtain from
\cite[Lemma~3.6]{kst2} that $(1+|\lam|)^{-1/2} \phi(\lam,x) \in L^2(\R,d\rho)$. In the general case we at least have  $G(z;x,.) \in L^2(a,b)$ and thus  $(1+|\lam|)^{-1} \phi(\lam,x) \in L^2(\R,d\rho)$.
Hence we can use Fubini's theorem to evaluate
\begin{align*}
K(H) f(x) &= U^{-1} K U f(x) = \int_\R \phi(x,\lam) K(\lam) \left( \int_a^b \phi(\lam,y) f(y) dy \right) d\rho(\lam)\\
&=\int_a^b K(x,y) f(y) dy .\qedhere
\end{align*}
\end{proof}

As a consequence we obtain that \eqref{rep} holds at least for $\im(t)<0$. To take the limit $\im(t)\to 0$ we need the following result
which follows from \cite[Lemma~3.1]{EKMT}.

\begin{lemma}\label{lem:c.2}
Consider the improper integral
\[
F(\eps) = \int_{-\infty}^\infty \E^{-\I(t+\I\eps) k^2} f(k) dk, \qquad \eps\le 0,
\]
where
\[
f(k) = \int_\R \E^{\I k p} d\alpha(p), \qquad |\alpha|(\R)<\infty.
\]
Then
\[
F(\eps) = \frac{1}{\sqrt{4\pi\I (t+\I\eps)}} \int_\R \E^{-\frac{p^2}{4(t+\I\eps)}} d\alpha(p).
\]
\end{lemma}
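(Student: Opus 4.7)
The plan is to compute $F(\eps)$ by a Fubini interchange followed by evaluation of a complex Gaussian integral, first for $\eps<0$ where everything converges absolutely, and then extending to $\eps=0$ via a dominated-convergence argument based on the van der Corput bound of Appendix~\ref{sec:vdCorput}.

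For $\eps<0$, the factor $|\E^{-\I(t+\I\eps)k^2}|=\E^{\eps k^2}$ decays Gaussianly in $k$ and $|f(k)|\le|\alpha|(\R)$, so substituting $f(k)=\int_\R \E^{\I kp}\,d\alpha(p)$ and applying Fubini is immediate:
\[
F(\eps)=\int_\R \Big(\int_\R \E^{-\I(t+\I\eps)k^2+\I kp}\,dk\Big)\,d\alpha(p).
\]
The inner integral is a complex Gaussian with parameter $a:=\I(t+\I\eps)$, whose real part equals $-\eps>0$. Completing the square in $k$ and shifting the contour (justified by the Gaussian decay of $|\E^{-au^2}|$) reduces it to $\sqrt{\pi/a}\,\E^{-p^2/(4a)}$, with the principal branch of the square root. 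Collecting constants into the prefactor $(4\pi\I(t+\I\eps))^{-1/2}$ and rewriting the exponent in terms of $t+\I\eps$ gives the right-hand side of the identity for $\eps<0$.

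To pass to $\eps=0$, where the integrand is only oscillatory and $F(0)$ has to be read as an improper integral, I would work with the truncation $F_R(\eps):=\int_{-R}^R\E^{-\I(t+\I\eps)k^2}f(k)\,dk$. On the compact strip $[-R,R]\times\R$ Fubini is unconditional (since $|\alpha|(\R)<\infty$), and the inner integral $\int_{-R}^R \E^{-\I tk^2+\I kp}\,dk$ converges pointwise in $p$ to its Fresnel--Gaussian limit as $R\to\infty$. The key input is a uniform-in-$R$ and $p$ bound of order $|t|^{-1/2}$ for this truncated Fresnel integral, which is precisely what the classical van der Corput lemma (Lemma~\ref{lem:vC}, applied with $c=p$ and $A\equiv 1$) supplies, with the remark that the bound survives the limit $(a,b)\to(-\infty,\infty)$. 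Given this uniform bound, dominated convergence against the finite measure $|\alpha|$ permits taking $R\to\infty$ inside $d\alpha(p)$; this simultaneously shows that the improper integral $F(0)$ exists and equals the right-hand side of the claimed formula. A parallel dominated-convergence argument in $\eps$ then gives continuity at $\eps=0$ if desired. The main obstacle is exactly this uniform Fresnel estimate --- once it is in hand, the remainder reduces to routine measure theory.
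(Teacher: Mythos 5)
Your proof is correct, and it is essentially the argument the paper itself relies on: the text gives no proof of Lemma~\ref{lem:c.2} beyond deferring to \cite[Lemma~3.1]{EKMT}, and the scheme there is exactly yours --- Fubini plus the complex Gaussian integral for $\eps<0$, then truncation together with the van der Corput bound (Lemma~\ref{lem:vC} with $A\equiv 1$, which is uniform in the endpoints and in the linear phase $c=p$) to dominate the passage to the improper integral at $\eps=0$ against the finite measure $|\alpha|$. One small point you should not wave away: carrying out the Gaussian integral with $a=\I(t+\I\eps)$ gives the inner integral as $\sqrt{\pi/a}\,\E^{-p^2/(4a)}=\sqrt{\pi/(\I(t+\I\eps))}\,\E^{\I p^2/(4(t+\I\eps))}$, which agrees with the displayed right-hand side only up to a normalization of the Fourier transform (a factor $2\pi$ in the prefactor) and an $\I$ in the exponent; so ``collecting constants gives the right-hand side'' needs either the corrected constants or the convention of \cite{EKMT}. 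This is a typo-level issue in the statement, not a flaw in your method, and it is immaterial for the only use of the lemma (locally uniform convergence of the kernels as $\eps\to 0$ in Lemma~\ref{lem:4.1}).
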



\begin{thebibliography}{99}

\bibitem{ab}
A.\ Ananieva and V.\ Budika, {\em To the spectral theory of the Bessel operator on a finite interval and half-line}, Ukrainian Mat. Visnyk, {\bf 12}, no.~2, 160--199 (2015), (in Russian); {\em English transl.:} J.\ Math.\ Sci.\ {\bf 211}, 624--645 (2015).

\bibitem{bo}
V.\ I.\ Bogachev,
{\em Measure Theory. I},
Springer-Verlag, Berlin, Heidelberg, 2007.

\bibitem{buk}
A.\ V.\ Bukhvalov,
{\em Application of methods of the theory of order-bounded operators to the theory of operators in $L^p$-spaces},
Russ.\ Math.\ Surveys {\bf 38}, no.~6, 43--98 (1983).

\bibitem{dr}
J.\ Derezi\'nski and S.\ Richard,
{\em On almost homogeneous Schr\"odinger operators},
\arxiv{1604.03340}

\bibitem{EKMT}
I.\ Egorova, E.\ Kopylova, V.\ Marchenko, and G.\ Teschl,
{\em Dispersion estimates for one-dimensional Schr\"odinger and Klein--Gordon equations revisited},
Russ.\ Math.\ Surveys {\bf 71}, 391--415 (2016).

\bibitem{erd}
A.\ Erdelyi, {\em Tables of Integral Transforms, Vol. 1},
McGraw-Hill, New York, 1954.

\bibitem{ek}
W.\ N.\ Everitt and  H.\ Kalf,
{\em The Bessel differential equation and the Hankel transform},
J.\ Comp.\ Appl.\ Math.\ {\bf 208}, 3--19 (2007).

\bibitem{gst}
F.\ Gesztesy, B.\ Simon, and G.\ Teschl,
{\em Zeros of the Wronskian and renormalized oscillation theory},
 Amer.\ J.\ Math.\ {\bf 118}, 571--594 (1996).
 
\bibitem{GS}
M.\ Goldberg and W.\ Schlag,
{\em Dispersive estimates for Schr\"odinger operators in dimensions one and three},
Comm.\ Math.\ Phys.\ {\bf 251}, 157--178 (2004).

\bibitem{K10}
E.\ Kopylova,
{\em Dispersion estimates for Schr\"odinger and Klein--Gordon equation},
Russ.\ Math.\ Surveys, {\bf 65}, no. 1, 95--142 (2010).

\bibitem{kst2}
A.\ Kostenko, A.\ Sakhnovich, and G.\ Teschl,
{\em Weyl--Titchmarsh theory for Schr\"odinger operators with strongly singular potentials},
Int.\ Math.\ Res.\ Not.\ {\bf 2012}, 1699--1747 (2012). 

\bibitem{kt2}
A.\ Kostenko and G.\ Teschl,
{\em Spectral asymptotics for perturbed spherical Schr\"odinger operators and applications to quantum scattering},
Comm.\ Math.\ Phys.\ {\bf 322}, 255--275 (2013).

\bibitem{ktt}
A.\ Kostenko, G.\ Teschl and J.\ H.\ Toloza,
{\em Dispersion estimates for spherical Schr\"odinger equations}, 
Ann.\ Henri Poincar\'e {\bf 17}, 3147--3176 (2016).

\bibitem{kotr}
H.\ Kova\v{r}\'{i}k and F.\ Truc,
{\em Schr\"odinger operators on a half-line with inverse square potentials},
Math.\ Model.\ Nat.\ Phenom.\ {\bf 9}, no.~5, 170--176 (2014).

\bibitem{lst12}
E.\ Liflyand, S.\ Samko and R.\ Trigub,
{\em The Wiener algebra of absolutely convergent Fourier integrals: an overview},
Anal.\ Math.\ Phys.\ {\bf 2}, 1--68, (2012).

\bibitem{lt11}
E.\ Liflyand and R.\ Trigub, {\em Conditions for the absolute convergence of Fourier integrals},
J.\ Approx.\ Theory {\bf 163}, 438--459 (2011).

\bibitem{dlmf}
F.\ W.\ J.\ Olver et al.,
{\em NIST Handbook of Mathematical Functions},
Cambridge University Press, Cambridge, 2010.

\bibitem{schlag}
W.\ Schlag,
{\em Dispersive estimates for Schr\"odinger operators: a survey},
in "Mathematical aspects of nonlinear dispersive equations", 
255--285, Ann. Math. Stud. {\bf 163}, Princeton Univ. Press, 
Princeton, NJ, 2007.

\bibitem{St}
E.\ M.\ Stein,
{\em Harmonic analysis: real-variable methods, orthogonality, and oscillatory integrals},
Princeton Math. Series {\bf 43}, Princeton University Press, Princeton, NJ, 1993.

\bibitem{tschroe}
G.\ Teschl,
{\em Mathematical Methods in Quantum Mechanics; With Applications to Schr\"odinger Operators},
2nd ed., Amer. Math. Soc., Rhode Island, 2014.

\bibitem{Wat}
G.\ N.\ Watson, 
{\em A Treatise on the Theory of Bessel Functions}, 
Cambridge Univ. Press, 1944.

\bibitem{wed}
R.\ Weder,
{\em $L^p-L^{\dot{p}}$ estimates for the Schr\"odinger equation on the 
line and inverse scattering for the nonlinear Schr\"odinger equation with a potential},
J.\ Funct.\ Anal.\ {\bf 170}, 37--68 (2000).

\bibitem{wed2}
R.\ Weder,
{\em The $L^p-L^{\dot{p}}$ estimates for the Schr\"odinger equation on the half-line},
J.\ Math.\ Anal.\ Appl.\ {\bf 281}, 233--243 (2003).

\end{thebibliography}
\end{document}